\newfont{\bssten}{cmssbx10}
\newfont{\bssnine}{cmssbx10 scaled 900}
\newfont{\bssdoz}{cmssbx10 scaled 1200}
\newtheorem{theorem}{Theorem}
\newtheorem{definition}{Definition}
\newtheorem{lemma}{Lemma}
\newtheorem{remark}{Remark}
\newtheorem{proposition}{Proposition}
\newtheorem{corollary}{Corollary}
\renewcommand{\leq}{\leqslant}
\renewcommand{\geq}{\geqslant}
\renewcommand{\le}{\leq}				
\renewcommand{\ge}{\geq}				
\newcommand{\eq}{\Longleftrightarrow}
\newcommand{\PP}{\ensuremath{\mathbb{P}}}
\newcommand{\x}{\mathbf x}
\newcommand{\bz}{\mathbf z}
\newcommand{\bu}{\mathbf z}
\newcommand{\by}{\mathbf y}
\newcommand{\ba}{\mathbf a}
\newcommand{\bb}{\mathbf b}
\newcommand{\bx}{\overline{\x}}
\newcommand{\bm}{\overline{\m}}
\newcommand{\tx}{\widetilde{\x}}
\newcommand{\tm}{\widetilde{\m}}
\newcommand{\olb}{\ol{\bb}}
\newcommand{\ollb}{\oll{\bb}}
\newcommand{\cc}{\mathbf c}
\newcommand{\olc}{\ol{\cc}}
\newcommand{\ollc}{\oll{\cc}}
\newcommand{\limt}{\lim\limits}
\newlength{\boxrulewidth}
\newlength{\boxrulesep}
\newlength{\gauchelong}
\newlength{\gauchelongadd}
\newlength{\droitelong}
\newlength{\colonnealong}
\newlength{\colonneblong}
\newlength{\colonneclong}
\newlength{\maxhaut}
\newlength{\maxbas}
\newcommand{\writeifexist}[1]{%
        \ifthenelse{\equal{#1}{\null}}{\null}{#1&}%
}
\newcommand{\lengthifexist}[2]{%
       \ifthenelse{\not\equal{#1}{\null}}{%
                \settowidth{#2}{$#1$}
                }{\setlength{#2}{0cm}}%
}
\def\arrayboxold#1{
    \@ifnextchar[
    	{\iarraybox{#1}}%
    	{\ivarraybox{#1}{\null}{\null}[\null]}}
\def\iarraybox#1[#2]{%
    \@ifnextchar[
    	{\iiarraybox{#1}{#2}}%
    	{\ivarraybox{#1}{#2}{\null}[\null]}}
\def\iiarraybox#1#2[#3]{%
    \@ifnextchar[
  	  {\iiiarraybox{#1}{#2}{#3}}%
    	{\ivarraybox{#1}{#2}{#3}[\null]}}
\def\iiiarraybox#1#2#3[#4]{%
    \@ifnextchar[
  	  {\ivarraybox{#1}{#2}{#3}[#4]\relax}%
    	{\ivarraybox{#1}{#2}{#3}[#4]}}%
\def\ivarraybox#1#2#3[#4]#5{%
        \settowidth{\gauchelong}{$#1\ $}%
        \settoheight{\maxhaut}{$#1#2#3#4#5$}
        \settodepth{\maxbas}{$#1#2#3#4#5$}
        \lengthifexist{#2}{\colonnealong}%
        \lengthifexist{#3}{\colonneblong}%
        \lengthifexist{#4}{\colonneclong}%
        \raisebox{-\maxbas-\boxrulewidth-\boxrulesep}{%
                \rule{\boxrulewidth}{%
                \maxbas+\maxhaut+2\boxrulewidth+2\boxrulesep}}%
        \raisebox{\maxhaut+\boxrulesep}{%
                \makebox[0cm][l]{\rule{%
                \gauchelong+\gauchelongadd+\colonnealong+\colonneblong%
				+\colonneclong}{%
                \boxrulewidth}}}%
        \raisebox{-\maxbas-\boxrulewidth-\boxrulesep}{%
                \makebox[0cm][l]{\rule{%
                \gauchelong+\gauchelongadd+\colonnealong+\colonneblong%
				+\colonneclong}{%
                \boxrulewidth}}}%
        \hskip\boxrulesep%
        #1&%
   		\writeifexist{#2}%
		\writeifexist{#3}%
        \writeifexist{#4}%
        #5%
        \settowidth{\droitelong}{$#5\ \eq $}
        \settoheight{\maxhaut}{$#1#2#3#4#5$}
        \settodepth{\maxbas}{$#1#2#3#4#5$}
        \hskip\boxrulesep%
        \hskip-\droitelong%
        \raisebox{\maxhaut+\boxrulesep}{%
                \makebox[0cm][l]{\rule{\droitelong}{\boxrulewidth}}}
        \raisebox{-\maxbas-\boxrulesep-\boxrulewidth}{%
                \makebox[0cm][l]{\rule{\droitelong}{\boxrulewidth}}}
        \hskip\droitelong%
        \raisebox{-\maxbas-\boxrulesep-\boxrulewidth}{%
                \rule{\boxrulewidth}{%
                \maxbas+\maxhaut+2\boxrulesep+2\boxrulewidth}}
}
\renewcommand{\min}{\mathop{\mathrm{min}\,}\limits}
\renewcommand{\max}{\mathop{\mathrm{max}\,}\limits}
\renewcommand{\inf}{\mathop{\mathrm{inf}\,}\limits}
\renewcommand{\sup}{\mathop{\mathrm{sup}\,}\limits}
\newfont{\cmmd}{rsfs10 at 10pt}
\newfont{\pcmmd}{rsfs10 at 7pt}
\newcommand{\et}{^{\star}}
\newcommand{\acco}[1]{\left\{#1\right\}} 
\newcommand{\pare}[1]{\left(#1\right)}
\newcommand{\croc}[1]{\left[#1\right]}
\newcommand{\crocc}[1]{[\![#1]\!]}
\newcommand{\virg}{\raise1pt\hbox{,}\ }
\newcommand{\pnt}{\raise1pt\hbox{.}\ }
\newcommand{\dsum}{\displaystyle\sum}
\def\dsum{\displaystyle\sum}
 \def\m{\mathbf m}
 \def\R{\mathbb R}
 \def\N{\mathbb N}
 \def\E{\mathbb E}
 \def\Z{\mathbb Z}
 \def\pasc#1{\textcolor{black}{#1}}
\def\pasca#1{\textcolor{black}{#1}}
 \newcommand\suite[1]{\left\{#1_n;\,n\in\N\right\}}
 \newcommand{\gre}{\mathbf e}
\newcommand\esp[1]{{\mathbb E}\left[#1\right]}
\newcommand\pr[1]{{\mathbb P}\left(#1\right)}
\newcommand{\devers}[2]{\colon #1 \rightarrow #2}
\newcommand{\fonccc}[4]{\left\{\begin{array}{ccc}#1 & \longrightarrow & #2 \\ #3& \longmapsto & #4\end{array}\right.}
\newcommand{\Ilun}{\mathcal{I}_{1}}
\newcommand{\Ilq}{\mathcal{I}_{3}}
\newcommand{\Cld}{\mathcal{C}_{2}}
\newcommand{\Ip}{\mathcal{I}'}
\newcommand{\Cp}{\mathcal{C}'}
\newcommand{\oll}[1]{\overline{\overline{#1}}}
\newcommand{\ol}[1]{\overline{#1}}
\newcommand{\BB}{\mathcal{B}}
\title[Dynamic programming for the GM model on the N-graph]{Dynamic programming for the stochastic matching model on general graphs: the case of the `N-graph'}
\author[L. Jean and P. Moyal]{Loïc Jean and Pascal Moyal\\\\
\scriptsize{IECL,Université de Lorraine and Inria PASTA}}
\begin{document}
\maketitle

\begin{abstract}
    In this paper, we address the optimal control of stochastic matching models on general graphs and single arrivals having fixed arrival rates, 
    as introduced in \cite{MaiMoy16}. 
    On the `N-shaped' graph, by following the dynamic programming approach of \cite{BCD19}, we show that a `Threshold'-type policy on the diagonal edge, with priority to the extreme edges, is optimal for the discounted cost problem and linear holding costs. 
    \end{abstract}

\section{Introduction}
In this work, we consider a general stochastic matching (GM) model, as defined in \cite{MaiMoy16}. It is roughly defined as follows: items of different classes enter a system one by one, and are of different classes. We let $V$ be the set of classes, and also define a compatibility graph $G=(V,E)$ on the set of 
classes. If two nodes (i.e., classes) $i$ and $j$ in $V$ share an edge in $G$, we consider that any couple of items of respective classes $i$ and $j$ are {\em compatible}. Then, upon arrival into the system, any element can either be stored in a buffer, or matched with a compatible item that is already present in the buffer. 
It is then the role of the matching policy to determine the match of a given item, in case of multiple choices. We say that the matching policy is {\em greedy} if 
any incoming item that finds compatible items in line is necessarily matched upon arrival. Otherwise, in the {\em non-greedy} case, one might want to keep two compatible items together in line, to wait for a more profitable future match. 

The GM model is a variant of the the so-called Bipartite stochastic matching (BM) model introduced 
in \cite{CKW09} for the {\em Fist Come, First matched} (FCFM) policy, and generalized in \cite{BGM13}. In this class of models that typically represent skill-based queueing systems, the compatibility graph $G$ is bipartite (there are classes of {\em customers} and classes of {\em servers}) and arrivals occur by pairs customer/server - an assumption that might appear as unpractical, in many applications.  The `general' stochastic matching model (GM) we consider in this work, stands for a system in which the compatibility graph is general (i.e., non necessarily bipartite), and arrivals occur by single items rather than couples, entailing a widely different dynamics, and a different analysis. For the GM model, an important line of literature first considered the stability problem. Necessary conditions for stability were shown in \cite{MaiMoy16}, entailing in particular that no GM model on a bipartite graph can be stabilizable. 
Then, \cite{MoyPer17,MBM21,JMRS23} obtained various stability results for greedy policies and non-bipartite graphs: 
\cite{MoyPer17} shows that greedy class-uniform policies never maximize the stability region, and that neither do greedy strict priority policies in general; 
By applying the dynamic reversibility argument in \cite{ABMW18} to general graphs and single arrivals, \cite{MBM21} proves that the greedy  {\em First Come, First Matched} policy maximizes the stability region, and construct explicitly the invariant measure and the matching rates between classes (a result that is completed in \cite{comte22} by an insightful comparison to order-independent queues). Then, \cite{JMRS23} proved that greedy matching policies of the `Max-Weight' type also maximize the stability region.

The performance optimization of this class of systems have recently been considered from the point of view of access control, by providing explicit procedures to {\em construct} the arrival rates in order to achieve given matching rates, for a {\em fixed} matching policy: see \cite{CMB21,BMM23}. Another approach has recently received an increasing interest: designing (greedy or non-greedy) matching policies able to maximize a given reward or to minimize a given cost, in the long run. In \cite{NS19}, for a vast class of matching structures, a variant of the greedy primal-dual algorithm was shown to be optimal for the long-term average matching reward, where the rewards are put on the matchings. Regarding the minimization of cumulative holding costs, \cite{GW15} derives a lower bound for the long-run cumulative holding cost, and shows that this bound can be asymptotically approached under some conditions. 
Then, \cite{BM15} constructs a policy that is approximately optimal with bounded regret, in the heavy-traffic regime. 
It is also shown in \cite{KAG23} that greedy policies are hindsight optimal (i.e., nearly maximize the total value function simultaneously at all times), and that a {\em static} hindsight optimal greedy policy can be prescribed, whenever $G$ is a tree. For fixed arrival rates, \cite{BCD19} show that, under the stability conditions prescribed in \cite{BGM13}, a matching policy of the `threshold' type is optimal for holding costs, for a BM model, whenever $G$ is the `N'-graph of Figure \ref{Fig:Ngraph} below, for the discounted cost as well as the average cost problem. 

In this work, we show that the result of \cite{BCD19} concerning a BM model on the `N'-graph, also holds for the GM model. Specifically, by using the tools of dynamic programming, we show that a `threshold'-type policy on the diagonal edge of $G$ is optimal, for the discounted cost problem. The analysis follows the main steps of the main proof in \cite{BCD19}; However, the technical arguments change significantly, due to the fact that arrivals are single rather than pairwise. In particular, it is remarkable that, due to Theorem 2 in \cite{MaiMoy16}, the system is not stabilizable by any 
matching policy (greedy or not). However, optimality is intended for the discounted cost problem (and for linear holding costs), guaranteeing the well-posedness of the optimization scheme, despite the instability of the process at hand. By doing so, for this simple graph, we show what appears as the first optimal control result for general stochastic models having fixed arrival rates. 

This paper is organized as follows. In Section \ref{sec:model} we start with some preliminary, and the definition of the general stochastic matching model at hand. Then, in Section \ref{sec:N} we introduce our dynamic programming problem and our main result, stating the optimality of a threshold-type policy on the diagonal edge with priority on the other edges.  
The proof of our main result is left to Section \ref{sec:proof}.

\section{Preliminary}\label{sec:model}

\subsection{Notation}
Let $\R$, $\N$ and $\N^*$ denote the sets of real numbers, non-negative and positive integers, respectively. 
For any $d\in \N^*$, we let $\llbracket 0,d-1 \rrbracket$ be the integer interval $\{0,1,2,\cdots,d-1\}$. 
For any $i\in\llbracket 0,d-1 \rrbracket$, we let $\gre_i$ be the $i$-th vector of the canonical basis of $\R^d$. 
Vectors of $\N^d$ will in general be denoted in bold, that is, we write e.g. $$\x=(x_0,\cdots,x_{d-1}).$$ 

\noindent For an un-directed graph $G=(V,E)$, we write $i - j$ if $\{i,j\}\in E$. For any $i\in V$, we let 
\[E(i)=\left\{j\in E\,:\,i-j\right\}.\]

\noindent Throughout, all random variables are defined on a common probability space $(\Omega,\mathscr F,\mathbb P)$. 

\subsection{A general stochastic matching model}

We consider a stochastic matching model, as defined in \cite{MaiMoy16}: 
Consider the `N-graph', represented in Figure \ref{Fig:Ngraph}. It is denoted by 
$$G=(V,E):=\left(\left\{0,1,2,3\right\},\left\{\ell_1,\ell_2,\ell_3\right\}\right)
:=\left(\left\{0,1,2,3\right\},\left\{\{0,1\},\{1,2\},\{2,3\}\right\}\right).$$

\begin{figure}[h!]
\begin{center}
\begin{tikzpicture}
\draw (0,0) -- (0,2);
\draw (0,2) -- (2,0);
\draw (2,0) -- (2,2);
\fill (0,0) circle (2pt) node[below] {\small{0}} ;
\fill (0,1) node[left] {\small{$\ell_1$}} ;
\fill (0,2) circle (2pt) node[above] {\small{1}} ;
\fill (1,1) node[right] {\small{$\ell_2$}} ;
\fill (2,0) circle (2pt) node[below] {\small{2}} ;
\fill (2,1) node[right] {\small{$\ell_3$}} ;
\fill (2,2) circle (2pt) node[above] {\small{3}} ;
\end{tikzpicture}
\caption[smallcaption]{The `N-graph'.}
\label{Fig:Ngraph}
\end{center}
\end{figure}
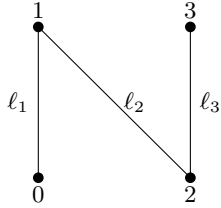

The dynamics of the system can be described as follows: Items enter one by one, in discrete time, and are of different {\em classes}. Upon arrival, the class $i\in V$ of the incoming item is drawn, independently of everything else, from a fixed probability distribution $\mu$ on $V$, and we say in that case that the item is a {\em $i$-item}. In other words we see the arrival process as an infinite $\N^4$-valued IID sequence 
$\suite{A}$ such that for all $n\in\N$ and $i\in V:=\llbracket 0,3 \rrbracket$,   
\[\pr{\mbox{The incoming item at step $n$ is a $i$-item}}:=\pr{A_n=\gre_i}=\mu(i),\]
where $\mu$ is a probability measure over $V$. For a node $j\in E(i)$, any $j$-item is said to be {\em compatible} with the $i$-item. 

Upon arrival, any $i$-item faces the following situation: 
\begin{enumerate}
    \item Either it does not find a compatible item in line, in which case it is stored in line;
    \item Or, there exists at least one compatible item in line at that time, in which case it is the role of the {\em decision rule}, to be defined hereafter, to determine a {\em match} for the $i$-item. Following the latter policy, we have the following sub-cases:
    \begin{enumerate}
        \item[2a)] Either the $i$-item is matched right away with a $j$-item for a given $j\in E(i)$, in which case both items leave the system right away. 
        \item[2b)] Or, the $i$-item is put in line to wait for a potential better match, despite the presence of one (or more) compatible item(s) in line. 
    \end{enumerate}
\end{enumerate}
At any time $n\in\N$, we denote by $X_n \in \N^4$, the state of the buffer just after time $n$. 
Namely, for any $i\in \llbracket 0,3 \rrbracket$ we denote by $X_n(i)$, the number of stored $i$-items in line just after time $n$, and set 
\[X_n=\left(X_n(0),\cdots,X_n(3)\right).\] 

\medskip

The decision rule is to be {\em Markovian}, if the choice of match at any time $n$ is prescribed by an operator $u_n\devers{\N^4}{\N^4}$ that is deterministic, or independent of the filtration generated by the $X_p,\,p\le n$, and such that 
\[X_{n+1}=u_n(X_n)+A_{n+1}.\]
The decision rule is said to be {\em greedy} if sub-case 2b) above is not allowed, namely, whenever an incoming item finds compatible items in line, it must be matched with one of those right away. An {\em admissible policy} $\pi$ is a sequence $\suite{u}$ of Markovian decision rules. We then say that the policy $\pi$ is {\em structured} by the decision rules $u_n,\,n\in\N$. 
We denote by \textsc{adm}, the set of admissible policies. Notice that a decision rule may be constant, namely $u_n=u$ for all $n$, in which case we say that then the policy $\pi$ is {\em stationary}. 
The state space of $\suite{X}$ is a subset $\mathbb X_\pi$ of $\N^4$ that depends on the policy $\pi$. For instance, if $\pi$ is greedy, then 
\[\mathbb X_\pi=\left\{\x=(x_0,\cdots,x_{3})\in \N^4\,:\,x_ix_j=0\mbox{ for any }\{i,j\}\in E\right\}.\]
Under a stationary policy $\pi$, 
the process $\suite{X}$ is thus a $\mathbb X_\pi$-valued homogeneous Markov chain, and for all state $\x\in \mathbb X_\pi$ 
we have for all $v\in V$, 
$$\pr{u(\x)+\gre_v\mid \x}:=\mathbb{P}\left(X_{n+1}=u(\x)+\gre_v \,|\, X_{n}=\x\right)=\mu(v).$$
\begin{definition}
	Any state $\x=(x_0,\cdots,x_3)\in\mathbb X_\pi$ corresponds uniquely to a block model graph 
        $\mathbf G_\x=(\mathbf V_\x,\mathbf E_\x)$ having $|\x|$ nodes, in which there are 
        $x_i$ nodes of type $i$ for all $i\in\llbracket 0,3 \rrbracket$ and, for any $i\ne j \in \llbracket 0,3 \rrbracket$, and any pair of nodes $u$ and $v$ of respective types $i$ and $j$, there exists an edge between $u$ and $v$, that is, $\{u,v\}\in \mathbf E_\x$, if and only if $\{i,j\}\in E$. Note that $\mathbf G_\x$ has no edge if $\pi$ is greedy. 
	\end{definition}
 
	\begin{definition}  
	For any $\x\in\mathbb X_\pi$, we denote by $\mathbf M_\x$, the set of all possible matches from state $\x$, that is, the set of all matchings 
        on $\mathbf G_\x$ or in other words, of all sub-graphs of $\mathbf G_\x$ such that each node has degree 0 or 1. 
	\end{definition}
For any pair of states $\x,\by\in \mathbb X_\pi$, we write $\x \to \by$ whenever $\pr{\by \mid \x}>0$. 
In the sequel, we denote by $\mathbb{E}^\pi_\x$ the expected value of r.v.'s, under the policy $\pi$, given that the initial state is 
$\x\in \mathbb X_\pi$. 
In the discussion that follows, we identify: 
\begin{itemize}
    \item Any edge $e=\{i,j\}\in E$ with the vector $\gre_i+\gre_j\in \N^4$;
    \item For any $\x\in\N^4$, any matching $\mathbf m\in \mathbf M_\x$ with the vector $\mathbf z\in \N^4$ such that 
    for all $i\in V$, $z_i$ is the number of items of class $i$ matched in $\mathbf m$. In other words, 
    \[\m\equiv \sum_{\stackrel{v\,\in \,\m:}{v\mbox{\scriptsize{ is of type }}i}}\gre_i,\]
    in a way that $\x-\mathbf m$ indeed represents the state of a system after executing all matchings in $\mathbf m $ in a system in state $\x$. 
\end{itemize}



\section{Optimization of the `N-graph' model }
\label{sec:N}

\subsection{Dynamic programming}
\noindent We now introduce our optimization problem. 

\begin{definition} 
 \label{defi:hypothese} \  
\noindent An {\em admissible linear cost function} is a linear mapping $c$ of the form 
$$c:\fonccc{\N^4}{\R_+}{\mathbf x:=(x_0,x_1,x_2,x_3)}{c_0x_0+c_1x_1+c_2x_2+c_3x_3}\,,$$
\noindent whose coefficients $c_0,c_1,c_2,c_3$ are non-negative, and such that $$c_2\leq c_0 \ \text{ and } \ c_1\leq c_3.$$
\end{definition}

Let $c:\N^4\to \R_+$ be an admissible linear cost function. For a fixed discount factor $\gamma\in (0,1)$ and a fixed initial state $\x\in \mathbb X_\pi$, our aim is to minimize over $\pi$, the so-called {\em discounted cost}

	
	
	
\begin{equation}
\label{eq:defdiscount}
v^\pi_\gamma(\x)=\limt_{N\to+\infty}\dsum_{n=0}^{N}\gamma^n\mathbb{E}^\pi_\x\croc{c(X_n)}.
\end{equation}

	
	
	

\begin{definition}
Let $v$ be a real function on $\N^4$, and $A$ be a r.v. of law $\mu$. 
For any $\x\in\N^4$, denote 
\[L^\gamma_\m v(\x) = c(\x)+\gamma\E\croc{v(\x-\m+A)},\,\mbox{ for all }\m\in\mathbf M_\x\]
and 
\begin{align*}L^\gamma v(\x) &= \min_{\m\in \mathbf M_\x}L^\gamma_\m v(\x)\\ &= c(\x) +\gamma \min_{\m\in \mathbf M_\x} \E\croc{v(\x-\m+A)}.
\end{align*}
	\end{definition}

\noindent Then we use the principle of dynamic programming, and obtain that for any state $\x\in\N^4$ that is admissible for some policy,  	
\begin{align*}
v(\x) & :=  \inf\limits_{\pi \in \scriptsize{\textsc{adm}}} v^\pi_\gamma(\x)\\
&=\inf\limits_{\pi \in \scriptsize{\textsc{adm}}}\E^\pi_\x\croc{\dsum_{n=0}^{+\infty}\gamma^n c(X_n)} \\
&= \inf\limits_{\pi \in \scriptsize{\textsc{adm}}}\E^\pi_\x\croc{c(\x)+\dsum_{n=1}^{+\infty}\gamma^n c(X_n)}  \\
&= c(\x)+\gamma \inf\limits_{\pi \in \scriptsize{\textsc{adm}}}\E^\pi_\x\croc{\dsum_{n=0}^{+\infty}\gamma^nc(X_{n+1})}  \\
&= c(\x)+\gamma \inf\limits_{\stackrel{\by \in \N^4\,:}{\x\to \by\scriptsize{\mbox{ for some }\pi\in\textsc{adm}}}}\esp{v(\by)} \\ 
& = c(\x)+ \gamma \min\limits_{\m\in \mathbf M_\x}\esp{v(\x-\m+A)} = L^\gamma v(\x).
\end{align*}
The mapping $v$ is then called the {\em value function} of our optimization problem, and the argument above shows that $v$ solves a fixed point equation of the Bellman type: for all $\x$ that is admissible for some policy,  
\begin{equation}
\label{eq:bellman}
    v(\x)=L^\gamma v(\x).
\end{equation}
In this paper, we put in evidence a simple policy that is able to achieve a solution to \eqref{eq:bellman} in the case of the `N'-graph.




\subsection{Threshold-type decision rules}
A decision rule will is to be of {threshold type} in $\ell_2$, with priority for $\ell_1$ and $\ell_3$, if it matches all possible $\ell_1$ and $\ell_3$ edges, before matching a certain amount of $\ell_2$ edges according to a threshold that depends on the difference between the number of remaining items $0$ and $2$. Formally, 
\begin{definition}
Let $(t_i)_{i\in\Z}$ be a family of elements of $\N\cup\{+\infty\}$. The map $u\devers{\N^4}{\N^4}$ is said to be a {\em threshold type decision rule} with priority for $\ell_1$ and $\ell_3$ and thresholds 
$(t_i)_{i\in\Z}$ in $\ell_2$, if it can be written as 
\[u(\x)=\x-m(\x),\quad \x\in\N^4,\]
where 
\[m:\fonccc{\N^4}{\N^4}{\x}{(x_0\wedge x_1)\ell_1 + (x_2\wedge x_{3})\ell_3+ k_{t_{i(\x)}}(\x)\ell_2},\]
\noindent 
for 
\[\begin{cases}
    k_t(\x) &= \left((\pasc{x_1-x_0})\wedge (x_2-x_3) - t\right)^+;\\[3mm]
    i(\x) &=\pasc{(x_{1}-x_{0})}-(x_2-x_3).
\end{cases}\]
\noindent We denote by $\mathscr T$, the set of threshold decision rules. A policy $\pi$ is said to be {\em of the threshold type}, if it is structured by decision rules of $\mathscr T$.  
\end{definition}

\subsection{Main result}

\noindent The following result shows that we can minimize the cost in the long run, by applying at any time, a matching policy of the threshold type, conveying the natural idea that it is preferable to match edges $\ell_3$ and $\ell_1$ over edges $\ell_2$, up to a certain state of 
congestion of the buffer. We have the following, 

\begin{theorem}
\label{thm:main}
For the discounted cost problem, under the assumptions of Definition \ref{defi:hypothese} there exists an optimal stationary policy of the threshold type. 
\end{theorem}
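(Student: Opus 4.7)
The plan is to carry out a dynamic programming argument in the spirit of \cite{BCD19}, adapted to the single-arrival dynamics of the GM model. I would proceed in three stages.

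\textbf{Stage 1: existence and characterization of the value function.} Starting from $v_0 \equiv 0$ and iterating $v_{n+1} := L^\gamma v_n$, the sequence $(v_n)$ is non-decreasing because $c \ge 0$, and the linear growth of $c$ together with the crude a.s.~bound $|X_n| \le |\x| + n$ makes each $v_n(\x)$ finite. A monotone convergence argument then provides a pointwise limit $v$ satisfying the Bellman equation \eqref{eq:bellman}, and shows that any stationary policy structured by a decision rule attaining the pointwise minimum in $L^\gamma v$ is optimal. The problem therefore reduces to showing that the minimizing matching in $L^\gamma v(\x)$ can be chosen of the threshold type at every state $\x$.

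\textbf{Stage 2: a stable structural class.} The heart of the proof is to exhibit a class $\mathcal{C}$ of non-negative functions on $\N^4$ such that (a) $c \in \mathcal{C}$; (b) $L^\gamma$ maps $\mathcal{C}$ into itself; (c) $\mathcal{C}$ is closed under pointwise monotone limits; and (d) for every $f \in \mathcal{C}$ and every $\x$, the minimum of $\m \mapsto \E[f(\x - \m + A)]$ over $\mathbf M_\x$ is attained by a threshold-type matching with priority on $\ell_1$ and $\ell_3$. A natural candidate is described by a finite list of discrete inequalities along the canonical directions $\gre_0,\ldots,\gre_3$ and the edge directions $\ell_1, \ell_2, \ell_3$: monotonicity in each coordinate, a ``priority'' inequality ensuring that any candidate $\ell_2$-match can be profitably substituted by $\ell_1$- and $\ell_3$-matches whenever possible, and a convexity-type inequality along the $\ell_2$ direction on each diagonal $i(\x) = (x_1 - x_0) - (x_2 - x_3)$ forcing a threshold structure on the number of $\ell_2$-matches. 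The constraints $c_2 \le c_0$ and $c_1 \le c_3$ of Definition \ref{defi:hypothese} are precisely what ensures $c \in \mathcal{C}$, and a direct sign analysis of finite differences yields (d).

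\textbf{Stage 3: propagation and conclusion.} The crux is to verify that each inequality defining $\mathcal{C}$ is preserved by $L^\gamma$. Given two states $\x, \by$ appearing in such an inequality and minimizing matchings $\m_\x, \m_\by$ at $\x$ and $\by$, one constructs for each arrival type $v \in V$ a coupling of the post-transition states so that the inductive inequality on $f$ can be invoked at $\x - \m_\x + \gre_v$ and $\by - \m_\by + \gre_v$; averaging with weights $\mu(v)$ then reproduces the inequality at the level of $L^\gamma f$. Induction yields $v_n \in \mathcal{C}$ for all $n$, and passing to the limit gives $v \in \mathcal{C}$, from which threshold-type optimality follows. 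The main obstacle, compared to \cite{BCD19}, is precisely the single-arrival structure: instead of a customer/server pair being added at each step, only one item is inserted, which breaks the symmetry of the BM couplings and forces a case-by-case analysis over the four arrival types. The delicate cases are those in which $v$ lies on a vertex in the symmetric difference of $\m_\x$ and $\m_\by$, where one must invoke the sub-optimality of a hand-crafted modification of $\m_\x$ or $\m_\by$ at the predecessor state in order to close the coupling; I expect the bulk of the technical work to lie in this exhaustive case analysis.
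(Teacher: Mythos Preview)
Your overall architecture matches the paper's: isolate a structural class of functions containing $c$, show it is preserved by $L^\gamma$ and closed under pointwise limits, and conclude that the value function inherits the structure, forcing the argmin in the Bellman operator to be of threshold type. Two points deserve comment. First, in Stage~1 the paper does not run value iteration directly; it invokes a weighted-norm existence theorem of Puterman (Theorem~6.11.3 there), checking its hypotheses with the weight $w(\x)=|\x|+1$. Your monotone-convergence route is plausible, but in this unbounded-cost, countable-state setting one still has to argue that the limit satisfies \eqref{eq:bellman} and that the greedy-with-respect-to-$v$ policy actually achieves $v$; the Puterman framework packages exactly these verifications.

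Second, and this is the real gap, the ``natural candidate'' class you describe in Stage~2 --- monotonicity along $\ell_1$ and $\ell_3$, the priority inequality, and convexity along $\ell_2$ on each diagonal --- is precisely the paper's $\Ilun\cap\Ilq\cap\Ip\cap\Cld$, and that set is \emph{not} stable under $L^\gamma$. The obstruction appears at boundary states where $x_1=x_0-1$ or $x_2=x_3-1$: there the optimal matching at $\x+\ell_2$ consumes an extra $\ell_1$ or $\ell_3$ pair, so the relevant second difference of $v$ is taken along $\ell_2-\ell_1$, $\ell_2-\ell_3$ or $\ell_2-\ell_1-\ell_3$ rather than $\ell_2$, and plain $\Cld$-convexity does not apply. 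The paper must therefore adjoin two further families of second-difference inequalities, denoted $\BB$ and $\Cp$, encoding convexity in these mixed directions at specific boundary configurations; only the enlarged class $\mathscr V=\Ilun\cap\Ilq\cap\Ip\cap\Cld\cap\BB\cap\Cp$ is provably $L^\gamma$-stable. Discovering these auxiliary inequalities is the non-obvious step, and the resulting case analysis (Proposition~\ref{prop:stable}) is considerably longer than your sketch anticipates. Your Stage~3 coupling intuition is the right picture for how each sub-case is handled, but you should expect to have to enlarge $\mathcal C$ before the induction closes.
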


\noindent Theorem \ref{thm:main} is proved in Section \ref{sec:proof}. 
\begin{remark}\rm 
It is significant that the above result holds regardless of the fact that the model at hand is instable - a fact that is implied by the first assertion of Theorem 2 in 
\cite{MaiMoy16}, as the `N'-graph is bipartite. It is easy to observe that the quantity in \eqref{eq:defdiscount} is well-defined for all $\pi$ and $\x$, since for all 
$n$, the $n$-th term of the summation is upper-bounded by $\gamma^n \left(\max\{c_0,\cdots,c_3\}(|\x|+n)\right)$. 
\end{remark}

\section{Proof of Theorem \ref{thm:main}}
\label{sec:proof}
\noindent We will crucially use the following result, 
\begin{theorem}[\cite{puterman2014markov}, 6.11.3]\label{theocentral} \
Suppose that there exists a function $w\devers{\N^4}{\R_+}$ such that
\begin{itemize} 
\item[(i)] For any $\x\in\N^4$, 
$$\sup\limits_{\bz\in\N^4\,:\,\x-\bz\in\N^4} \dfrac{c(\x-\bz)}{w(\x)}<+\infty \quad \text{ and } \quad \sup\limits_{\bz\in\N^4\,:\,\x-\bz\in\N^4} \dfrac{1}{w(\x)}\dsum_{\by} \PP(\by | \x-\bz)w(\by) <+\infty.$$

\item[(ii)] For all $\varepsilon\in[0:1)$, there exists $\eta\in[0:1)$ and $p\in\N\et$ such that for all $p$-tuple $\pi=(u_1,u_2,\ldots, u_p)$ of deterministic Markovian decision rules $u_i\devers{\N^4}{\N^4}$, 
$$\varepsilon^p \mathbb E^\pi_\x\left[w(X_p)\right] < \eta w(\x).$$

\item[(iii)] If we denote $$V_w=\acco{v\devers{\N^4}{\R_+}\,:\, \sup\limits_{x\in\N^4} \dfrac{v(\x)}{w(\x)}<\infty},$$ 
\noindent then for all $v\in V_w$ there exists a deterministic Markovian decision rule $u$ such that for all $\x\in\N^4$, 
$$L^\gamma v(\x) = L^\gamma_{u(\x)}v(\x).$$
\end{itemize}

\noindent Suppose also that there exist two sets $\mathscr V$ and $\mathscr D$ such that 

\begin{enumerate}
\item $\mathscr V$ is stable under $L^\gamma$ and under pointwise convergence;
\item If $v\in \mathscr V$, then there exists a deterministic markovian decision rule $u\in \mathscr D$ such that for all $\x \in \N^4,$ 
\[L^\gamma v(\x) = L^\gamma_{u(\x)}v(\x).\]
\end{enumerate}

\noindent Then, there exists an optimal stationary policy $\pi\et$ structured by a single Markovian decision rule $u\et\in \mathscr D$.
\end{theorem}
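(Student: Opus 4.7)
The plan is to follow the standard weighted supremum-norm dynamic programming argument (as developed in Chapter 6 of Puterman), adapted to the four hypotheses (i)--(iii) and the two additional structural conditions (1)--(2). First I would equip the space $V_w$ with the weighted norm $\|v\|_w := \sup_{\x \in \N^4} |v(\x)|/w(\x)$ and verify that $(V_w,\|\cdot\|_w)$ is a Banach space. Using condition (i), I would show that $L^\gamma$ maps $V_w$ into itself: the cost contribution $c(\x)$ is controlled by $w(\x)$, and the integral term $\E[v(\x-\m+A)] = \sum_\by \PP(\by\mid\x-\m)v(\by)$ is bounded in weighted norm by $\|v\|_w$ times the supremum in (i) (applied with $\bz = \m$).

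Next, the core contraction argument. Taking $\varepsilon = \gamma$ in condition (ii), for every $p$-tuple $\pi=(u_1,\dots,u_p)$ of deterministic Markovian decision rules,
\[
\gamma^p \E^\pi_\x[w(X_p)] < \eta w(\x), \qquad \eta<1.
\]
Unwinding the definition of $L^\gamma$, the $p$-fold iteration $(L^\gamma)^p$ of the Bellman operator can be expressed as the infimum over $p$-tuples of rules of a sum of discounted cost terms plus $\gamma^p \E^\pi_\x[v(X_p)]$. The bound above then implies
\[
\|(L^\gamma)^p v_1 - (L^\gamma)^p v_2\|_w \le \eta \|v_1-v_2\|_w
\]
for any $v_1,v_2\in V_w$, so $(L^\gamma)^p$ is a strict contraction on $V_w$. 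The Banach fixed-point theorem yields a unique $v^*\in V_w$ with $L^\gamma v^* = v^*$, obtained as the limit $v^* = \lim_n (L^\gamma)^n v_0$ for any $v_0\in V_w$. I would then run the standard verification argument to identify $v^*$ with the value function: iterating the Bellman inequality gives $v^*(\x) \le v^\pi_\gamma(\x)$ for every admissible $\pi$ (hence $v^* \le v$), while condition (iii) supplies a selector $u^*$ achieving $L^\gamma v^*(\x) = L^\gamma_{u^*(\x)} v^*(\x)$, and the stationary policy $\pi^*$ structured by $u^*$ satisfies $v^{\pi^*}_\gamma = v^*$ by substituting $u^*$ into the Bellman equation and passing to the limit (the tail term $\gamma^n \E^{\pi^*}_\x[v^*(X_n)]$ vanishes thanks to (ii) combined with $v^*\in V_w$). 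Hence $v^*=v$ and $\pi^*$ is optimal.

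Finally, I would use the two structural assumptions to force $u^* \in \mathscr D$. Pick any $v_0 \in \mathscr V$; by (1), each iterate $(L^\gamma)^n v_0$ remains in $\mathscr V$, and since $\mathscr V$ is closed under pointwise convergence, the limit $v^*$ lies in $\mathscr V$ as well (pointwise convergence being implied here by weighted-norm convergence on $\N^4$). Condition (2) then delivers a decision rule $u^* \in \mathscr D$ satisfying $L^\gamma v^*(\x) = L^\gamma_{u^*(\x)} v^*(\x)$ for every $\x$, and the stationary policy structured by this $u^*$ is the desired optimal policy.

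The main obstacle I expect is the verification step identifying the fixed point $v^*$ with the value function $v$. One must carefully justify the interchange of the infimum over admissible policies with the limit in $n$, control the residual term $\gamma^n \E^\pi_\x[v^*(X_n)]$ uniformly in $\pi$ (this is precisely what (i)--(ii) are designed for), and handle the fact that policies need not be stationary while the fixed-point argument naturally produces a stationary selector. The other subtlety is checking that pointwise convergence inside $\mathscr V$ is indeed the right notion to combine with the weighted-norm contraction, so that the measurability/selection statement in (2) can be applied at the limit.
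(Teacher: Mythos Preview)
The paper does not prove this theorem: it is stated with the attribution \emph{[Puterman, 6.11.3]} and is used as a black box to conclude the proof of the main result in Section~\ref{subsec:proof}. Your sketch is a faithful outline of the standard weighted-norm contraction argument from Chapter~6 of Puterman (bounding $L^\gamma$ on $V_w$ via (i), obtaining a $p$-step contraction from (ii), invoking Banach's fixed point, identifying the fixed point with the value function, and then using the structure hypotheses (1)--(2) to place the selector in $\mathscr D$), so there is nothing to compare against in the paper itself.
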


The strategy is as follows. 
We start by defing a set of test functions $\mathscr V$ that is stable under the action of $L^\gamma$, and such that if $v\in \mathscr V$, we can always find a threshold type decision rule $m\in \mathscr T$ such that \eqref{eq:bellman} holds for all $\x\in\N^4$. 

\subsection{Properties of value functions}

\begin{definition}
A mapping $v\devers{\N^4}{\R_+}$ is said to be {\em increasing in} $\ell_1$, and we write $v\in\Ilun$ if, for any $\x\in\N^4$ 
we have $$v(\x)\leq v(\x+\ell_1).$$
Likewise, a mapping $v\devers{\N^4}{\R_+}$ is said to be {\em increasing in} $\ell_3$, and we write $v\in\Ilq$ if, for any $\x\in\N^4$ 
we have $$v(\x)\leq v(\x+\ell_3).$$ 
\end{definition}


{It is reasonable to think that if we have the choice between matching an $\ell_2$ edge and both an $\ell_1$ and an $\ell_3$ edge, we will rather do the latter. Hence the following definition,     
\begin{definition}
We say that $v\devers{\N^4}{\R_+}$ belongs to $\Ip$ if, 
for all $\x\in \N^4$ such that $x_1x_2>0$,  
\[\begin{cases}
 v(\x) & \leq  v(\x+\ell_1-\ell_2)  \\
 v(\x) & \leq  v(\x+\ell_3-\ell_2).
\end{cases}\]
\end{definition}
}

\noindent Moreover, we want to highlight that it is less and less interesting to keep items of classes \pasc{$1$ or $2$} in line. Hence the following definition,  

\begin{definition}
We say that $v\devers{\N^4}{\R_+}$ is {\em convex in} $\ell_2$ and we write $v\in\Cld$ if, for all state $\x\in\N^4$
such that $x_2\geq x_3-1$ and \pasc{$x_1\geq x_0-1$}, 
$$v(\x+2\ell_2)-v(\x+\ell_2)\geq v(\x+\ell_2)-v(\x).$$
\end{definition}

\subsection{Optimal policies}

\noindent We now show that these properties are already enough to satisfy the required property, namely, that a value function 
solving \eqref{eq:bellman} corresponds to a policy structured by a fixed threshold decision rule. We have the following result, 

\begin{lemma}\label{lemme35} 
Let $\gamma$, $v\in\Ilun\cap\Ilq\cap\Ip$ and $\x\in\N^4$. 
Then, there exists a matching $\mathbf m\in \mathbf M_\x$ such that $$L^\gamma_{\mathbf m} v(\x) = L^\gamma v(\x),$$ 
and such that $\mathbf m$ matches all possible $\ell_1$ and $\ell_3$ edges in a system in state $\x$. \end{lemma}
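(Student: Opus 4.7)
The plan is to show that any matching $\m\in\mathbf M_\x$ can be transformed into a matching $\m_s$ that saturates the $\ell_1$- and $\ell_3$-edges (using $x_0\wedge x_1$ edges of type $\ell_1$ and $x_2\wedge x_3$ edges of type $\ell_3$) without increasing the quantity $L^\gamma_\m v(\x)$. Since $\mathbf M_\x$ is finite, the minimum $L^\gamma v(\x)=\min_\m L^\gamma_\m v(\x)$ is attained at some $\m^*$, and applying the transformation to $\m^*$ yields the desired saturated optimizer.

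Parameterize a matching $\m$ by the triple $(a,b,c)\in\N^3$ counting its edges of types $\ell_1$, $\ell_2$, $\ell_3$ respectively, so that $\m=a\ell_1+b\ell_2+c\ell_3$. The construction progressively increases $a$ toward $x_0\wedge x_1$, and symmetrically $c$ toward $x_2\wedge x_3$, using two local moves. \textbf{Addition.} When $a<x_0\wedge x_1$ and $a+b<x_1$ (a class-$1$ item is still unmatched), replace $\m$ by $\m'$ parameterized by $(a+1,b,c)$. Then $\x-\m'=\x-\m-\ell_1$, and applying $\Ilun$ at $\y=\x-\m'+A\in\N^4$ gives $v(\y)\leq v(\y+\ell_1)=v(\x-\m+A)$ pointwise in $A$, whence $L^\gamma_{\m'}v(\x)\leq L^\gamma_\m v(\x)$. \textbf{Swap.} When $a<x_0\wedge x_1$ but $a+b=x_1$ (forcing $b\geq 1$ since $a<x_1$), replace $\m$ by $\m'$ parameterized by $(a+1,b-1,c)$. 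Then $\x-\m'=\x-\m+\gre_2-\gre_0=\x-\m+(\ell_2-\ell_1)$, and we invoke $\Ip$ to compare $v(\x-\m'+A)$ with $v(\x-\m+A)$. The analogous addition and swap moves for $\ell_3$ rely on $\Ilq$ and on the second inequality in $\Ip$. Iterating (addition whenever feasible, swap otherwise) terminates in finitely many steps at a saturated matching $\m_s$.

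The main obstacle is verifying the swap step. Applied at $\y=\x-\m'+A$, $\Ip$ yields $v(\y)\leq v(\y+\ell_1-\ell_2)=v(\x-\m+A)$ provided $\y_1\geq 1$ and $\y_2\geq 1$. The second condition always holds here, since $\y_2=(x_2-b-c)+1+A_2\geq 1$; but $\y_1=(x_1-a-b)+A_1=A_1$, so $\Ip$ furnishes the required bound directly only for the arrival $A=\gre_1$. For $A\in\{\gre_0,\gre_2,\gre_3\}$, both $\x-\m+A$ and $\x-\m'+A$ have vanishing class-$1$ coordinate and $\Ip$ cannot be invoked in one shot. The plan for these arrival cases is to chain $\Ilun$ and $\Ip$ through auxiliary states: adding a phantom $\ell_1$ via $\Ilun$ lifts each of the two states into the regime where $\Ip$ applies, and careful manipulation of the resulting inequalities recovers the pointwise bound $v(\x-\m'+A)\leq v(\x-\m+A)$, or at worst its expectation over $A$ by balancing the strictly favorable $A=\gre_1$ contribution against the others weighted by $\mu$. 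Discharging this chain of comparisons in every arrival case is the core technical step of the proof.
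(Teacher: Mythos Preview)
Your plan is the paper's: take an arbitrary matching and improve it by \emph{addition} moves (append an $\ell_1$ or $\ell_3$ edge, justified by $\Ilun,\Ilq$) and \emph{swap} moves (trade an $\ell_2$ edge for an $\ell_1$ or $\ell_3$ edge, meant to be justified by $\Ip$). The paper performs the additions in one block ($\m^0\!\to\!\m^1$) and then the swaps ($\m^1\!\to\!\m^2\!\to\!\m^3$); you interleave them, but the substance is identical. You also go further than the paper in isolating the hypothesis of $\Ip$: as written, $\Ip$ only gives $v(\z)\le v(\z+\ell_1-\ell_2)$ when $z_1\ge 1$ and $z_2\ge 1$, and you correctly note that at swap time all class-$1$ items are matched, so $(\x-\m'+A)_1=A_1$ vanishes for $A\in\{\gre_0,\gre_2,\gre_3\}$. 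The paper invokes $\Ip$ at precisely this point without checking that hypothesis.

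The repair you sketch, however, does not close the gap. Chaining $\Ilun$ then $\Ip$ overshoots: from $v(\z)\le v(\z+\ell_1)\le v(\z+2\ell_1-\ell_2)$ you land above the target $v(\z+\ell_1-\ell_2)$, since by $\Ilun$ the target lies \emph{below} $v(\z+2\ell_1-\ell_2)$, not above it. The averaging alternative fails because $\Ilun\cap\Ilq\cap\Ip$ puts no constraint on the behavior of $v$ along $\gre_0-\gre_2$ on the slice $\{x_1=0\}$. Concretely, take $v(\x)=10\,|\x|+h(\x)\,\mathbf 1_{\{x_1=0\}}$ with $h\equiv 10$ except $h(1,0,1,0)=0$; it is routine to check $v\in\Ilun\cap\Ilq\cap\Ip$, yet at $\x=(1,1,1,0)$ one computes $L^\gamma_{\ell_1}v(\x)-L^\gamma_{\ell_2}v(\x)=10\gamma\bigl(\mu(2)-\mu(0)\bigr)$, so for any $\mu$ with $\mu(2)>\mu(0)$ the \emph{unique} minimizer is $\m=\ell_2$, which does not saturate $\ell_1$. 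Thus the ``core technical step'' you defer is not a bookkeeping chore: under the stated hypotheses it cannot be discharged, and the obstacle you spotted is a genuine gap shared with the paper's own proof rather than a detail to be filled in later.
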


\begin{proof}[Proof] 
Let $\x\in \N^4$, and let $\mathbf m^0\in \mathbf M_\x$. 
We let $\m^1$ be the matching we obtain by executing all matches of edges $\ell_1$ and $\ell_3$ that are still possible in the state 
$\x-\m$, that is, 
\[\m^1=\m^0 + (x_0-m^0_0) \wedge (x_1-m^0_1)\ell_1 + (x_2-m^0_2) \wedge (x_3-m^0_3)\ell_3.\] 
Because $v\in\Ilun\cap\Ilq$ and because $\m^1$ can be obtained from  $\m^0$ by successively adding only $\ell_1$ and $\ell_3$ edges, 
we have $$L^\gamma_{\m^1} v(\x) \leq L^\gamma_{\m^0} v(\x).$$
Let $n^1_2$ denotes the number of $\ell_2$ edges in $\m^1$. We are now going to find a matching that is at least as good and which has $\min(x_0,x_1)$ $\ell_1$ edges. To do so, we define $\m^2$ to be the matching obtained from $\m^1$ by transforming as many $\ell_2$ edges in $\ell_1$ edges as possible, 
that is, 
\[\m^2=\m^1 + (n^1_2 \wedge (\pasc{x_0-m^1_0}))(\ell_1-\ell_2).\]
As $\m^2$ can be obtained from $\m^1$ by adding $\ell_1-\ell_2$ a finite number of times, we get that 
$$L^\gamma_{m^2} v(\x) \leq L^\gamma_{m^1} v(\x).$$
Notice that the matching $\m^2$ has exactly $x_0\wedge x_1$ $\ell_1$ edges. Indeed, if it was not so, then we would be in the following alternative:  
\begin{itemize}
    \item[(i)] Either in $\x-\m^2$ there would be a positive number of unmatched $0$-items and a positive number of unmatched $1$-items;
    \item[(ii)] Or, in $\m^2$ there would be a positive number of $1$-items matched with $2$-items.
\end{itemize}
But (i) is in contradiction with the definition of $\m^1$, and (ii) contradicts the definition of $\m^2$, an absurdity. 

\medskip

Now $n^2_2$ denotes the number of $\ell_2$ edges in $\m^2$ (and notice that $n^2_2=0$ if $x_0-m^1_0\ge n^1_2$). 
We now define $\m^3$ to be the matching obtained from $\m^2$ by transforming as many $\ell_2$ edges in $\ell_3$ edges as possible, 
that is, 
\[\m^3=\m^2 + (n^2_2\wedge (\pasc{x_3-m^2_3}))(\ell_3-\ell_2).\]
Again, as $\m^2$ can be obtained from $\m^1$ by adding $\ell_3-\ell_2$ a finite number of times, we get that 
$$L^\gamma_{m^3} v(\x) \leq L^\gamma_{m^2} v(\x).$$
Notice that the matching $\m^3$ also has exactly $x_3\wedge x_2$ $\ell_3$ edges. 
Indeed, if it was not so, we would, again, either find in $\x-\m^3$ a positive number of unmatched $3$-items and a positive number of 
unmatched $2$-items, or find in $\m^2$, a positive number of $2$-items matched with $1$-items, another absurdity given the definitions 
of $\m^1$ and $\m^3$. 

\medskip 

As a conclusion, we have thus shown that for every matching $\m^0$ we can find a matching $\m^3$ which has exactly 
$\min(x_0,x_1)$ $\ell_1$ edges and $\min(x_2,x_3)$ $\ell_3$ edges, and such that 
$$L^\gamma_{m^3} v(\x) \leq L^\gamma_{m^0} v(\x).$$
Since there is a finite number of possible matchings, the proof is complete. 
\end{proof}

\noindent Observe the following result, 

\begin{corollary} 
\label{coro:angels}
For any $0<\gamma<1$, the function set $\Ilun\cap\Ilq\cap\Ip$ is stable by the operator $L^\gamma $.
\end{corollary}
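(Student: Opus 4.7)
The plan is to verify separately that each of $\Ilun$, $\Ilq$, and $\Ip$ is stable under $L^\gamma$, using a single coupling idea throughout: given $v$ in the intersection and given a pair (smaller state $\x$, larger state $\by$) on which a monotonicity inequality must be checked, I will (i) pick an optimal matching $\m^*\in\mathbf M_{\by}$ for $L^\gamma v(\by)$, invoking Lemma~\ref{lemme35} to force $\m^*$ to saturate all available $\ell_1$ and $\ell_3$ edges, (ii) build an explicit competitor $\widehat{\m}\in\mathbf M_\x$ in the smaller state by a local modification of $\m^*$, chosen so that $\x-\widehat{\m}=\by-\m^*$, and (iii) use the direction-appropriate monotonicity of the admissible cost $c$ to conclude. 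The resulting chain of inequalities always takes the form
\[
L^\gamma v(\x)\;\leq\; L^\gamma_{\widehat{\m}}v(\x)\;=\;c(\x)+\gamma\E\croc{v(\by-\m^*+A)}\;\leq\; c(\by)+\gamma\E\croc{v(\by-\m^*+A)}\;=\;L^\gamma v(\by).
\]

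For $\Ilun$, and symmetrically for $\Ilq$, the larger state is $\by=\x+\ell_1$, and I simply take $\widehat{\m}:=\m^*-\ell_1$: since $\x+\ell_1$ contains at least one item of class $0$ and one of class $1$, Lemma~\ref{lemme35} forces $\m^*$ to include at least one $\ell_1$ edge, so the modification is well defined; the inequality $c(\x)\leq c(\x+\ell_1)$ is immediate because $c$ has non-negative coefficients.

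For the $\ell_1-\ell_2$ direction of $\Ip$, the larger state is $\by=\x+\ell_1-\ell_2$, and I set $\widehat{\m}:=\m^*-\ell_1+\ell_2$, that is, I delete one $\ell_1$ edge and reuse the freed class-$1$ item to form a new $\ell_2$ edge with an unmatched class-$2$ item. The inequality $c(\x)\leq c(\by)$ here is exactly the admissibility assumption $c_2\leq c_0$ of Definition~\ref{defi:hypothese}. The $\ell_3-\ell_2$ direction is treated symmetrically with $\widehat{\m}:=\m^*-\ell_3+\ell_2$ and the assumption $c_1\leq c_3$.

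The main obstacle is not the algebra but checking in each instance that $\widehat{\m}$ really is a matching on the block-model graph $\mathbf G_\x$, i.e.\ that no item is used twice and no edge demands an unavailable item. For $\Ilun$ and $\Ilq$ this reduces to a direct item count, using that $\m^*$ saturates the $\ell_1$ (resp.\ $\ell_3$) edges in $\by$. The delicate case is $\Ip$: the freshly added $\ell_2$ edge must pair the freed class-$1$ item with a class-$2$ item that is still unmatched by $\m^*$ in $\x$. Existence of such a $2$-item is exactly where the smaller state is easier than the larger one: $\m^*\in\mathbf M_{\by}$ uses at most $x_2-1$ class-$2$ items, so at least one remains free in $\x$. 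Once this combinatorial validity is secured, the identity $\x-\widehat{\m}=\by-\m^*$ is a one-line check in $\N^4$, and the corollary follows.
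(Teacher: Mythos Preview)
Your proof is correct and follows essentially the same approach as the paper: pick an optimal matching $\m^*$ on the larger state via Lemma~\ref{lemme35} (so that all $\ell_1$ and $\ell_3$ edges are saturated), modify it locally to obtain a competitor $\widehat{\m}$ on the smaller state with $\x-\widehat{\m}=\by-\m^*$, and conclude using the monotonicity of $c$. The paper does exactly this, with $\widehat{\m}=\m^*-\ell_1$ for $\Ilun$ and $\widehat{\m}=\m^*-\ell_1+\ell_2$ for the $\ell_1$-part of $\Ip$; your treatment is in fact slightly more detailed in justifying why $\widehat{\m}$ is a valid matching in $\mathbf M_\x$.
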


\begin{proof}[Proof]
First, it is easily checked that any linear cost function $c$ satisfying to definition \ref{defi:hypothese} is in particular, an element of 
$\Ilun\cap\Ilq\cap\Ip$. For instance, for any $\x$ such that $x_1x_2>0$, 
\[c(\x+\ell_1-\ell_2)-c(\x)=c(\ell_1)-c(\ell_2)=c_0-c_2 \ge 0,\]
and the other arguments are similar. 
Let $v\in \Ilun\cap\Ilq\cap\Ip$. First let $\x\in\N^4$, and $\bx=\x+\ell_1$. 
According to Lemma \ref{lemme35}, we can choose a matching $\overline{\m}=(\overline m_0,\overline m_1, \overline m_2, \overline m_3)\in \mathbf M_{\bx}$, 
which matches all possible $\ell_1$ and $\ell_3$ edges, and such that $L^\gamma_{\bm} v(\bx)=L^\gamma v(\bx)$. 
In particular, we have that $\overline m_0\geq1$ and $\overline m_1\geq1$, and so we can define a matching $\m=\overline{\m}-\ell_1\in \mathbf M_{\x}$  
such that $\bx-\overline{\m}=\x-\m$. It follows that 
\begin{align*}
   L^\gamma v(\x) &\le L^\gamma_{\m} v(\x)\\
   & = c(\x) + \gamma \E[v(\x-\m+A)] \\
   & \leq c(\bx) + \gamma\esp{v(\bx-\overline{\m}+A)} \\
   &= L^\gamma_{\bm} v(\bx) = L^\gamma v(\bx).
\end{align*}
As the above holds for all $\x$ we conclude that $L^\gamma v\in\Ilun$, and the same argument shows that $L^\gamma v\in\Ilq$. 

We now prove that $L^\gamma v\in\Ip$. 
Let $\x\in\N^4$ be such that $x_1x_2>0$, and $\tx=x+\ell_1-\ell_2$. According to Lemma \ref{lemme35}, we can choose a matching $\tm=(\widetilde m_0,\widetilde m_1,\widetilde m_2,\widetilde m_3)\in \mathbf M_{\tx}$, which matches all possible $\ell_1$ and $\ell_3$ edges, and such that 
$L^\gamma_{\tm} v(\tx)=L^\gamma v(\tx)$. Again, we have that $\tilde m_0\ge 1$ and $\tilde m_1 \ge 1$, and so the matching $\m':=\tm+\ell_2-\ell_1$ 
is an element of $\mathbf M_{\x}$ such that $\tx-\tm=\x-\m'$. Thus we have again 
\begin{align*}
L^\gamma v(\x) &\le L^\gamma_{\m'} v(\x)\\
&= c(\x) + \gamma \E[v(\x-\m'+A)] \\
& \leq c(\tx) + \gamma\E[v(\tx-\tm+A)] \\
& = L^\gamma_{\tm} v(\tx) = L^\gamma v(\tx). 
\end{align*}
We can prove similarly that the same holds true by replacing $\tx$ by $\x+\ell_3-\ell_2$. 
This shows that $L^\gamma v\in\Ip$, which concludes the proof. 
\end{proof}

\begin{proposition}\label{propseuilopti} \ 

\noindent For any $\gamma$ and $v\in \Ilun\cap\Ilq\cap\Cld\cap\Ip$, there exists a decision rule $m\et$ with threshold 
$(t_i)_{i\in\Z}$, such that for all $\x\in\N^4$,$$L^\gamma_{m\et(\x)} v(\x) = L^\gamma v(\x)$$

\end{proposition}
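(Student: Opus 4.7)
The plan is to combine Lemma \ref{lemme35} with the convexity property $\Cld$ in order to reduce the minimization $L^\gamma v(\x)$ to that of a convex function of a single integer variable. First, Lemma \ref{lemme35} already prescribes that an optimal matching executes all $(x_0\wedge x_1)$ available $\ell_1$-edges and all $(x_2\wedge x_3)$ available $\ell_3$-edges, so the only remaining degree of freedom is the number $k\in\{0,\ldots,K(\x)\}$ of $\ell_2$-edges matched, with $K(\x):=(x_1-x_0)^+\wedge(x_2-x_3)^+$. The goal is then to show that the optimal $k$ takes the form $(L(\x)-t_{i(\x)})^+=k_{t_{i(\x)}}(\x)$ for a family $(t_i)_{i\in\Z}\subset\N\cup\{+\infty\}$, where $L(\x):=(x_1-x_0)\wedge(x_2-x_3)$.

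The core reparametrization is as follows. When $K(\x)\ge 1$, the state $\by$ obtained from $\x$ after executing all priority matches satisfies $\by_0=\by_3=0$ and $(\by_1,\by_2)=(L+i^+,L+i^-)$, with $L=L(\x)$, $i=i(\x)$ and $i^\pm:=\max(\pm i,0)$. Setting $s:=L-k$, the residual state $\by-k\ell_2$ depends on the pair $(\x,k)$ only through $(s,i)$, namely $\by-k\ell_2=\mathbf q_i(s)$ with $\mathbf q_i(s):=(0,s+i^+,s+i^-,0)$. Defining
\[f_i(s):=\esp{v\pare{\mathbf q_i(s)+A}},\qquad s\in\N,\]
and denoting by $\m_k(\x)$ the matching composed of all priority edges plus $k$ $\ell_2$-edges, one obtains $L^\gamma_{\m_k(\x)}v(\x)=c(\x)+\gamma f_i(L-k)$. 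The key analytic step is then to prove that $f_i$ is convex on $\N$: for every canonical vector $A\in\{\gre_0,\gre_1,\gre_2,\gre_3\}$, the state $\mathbf w:=\mathbf q_i(s)+A$ has $w_0,w_3\in\{0,1\}$ and $w_1,w_2\ge 0$, so the premise $w_1\ge w_0-1$, $w_2\ge w_3-1$ of $v\in\Cld$ is automatically satisfied; applying $\Cld$ and averaging over $A$ yields $f_i(s+2)-2f_i(s+1)+f_i(s)\ge 0$.

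To close, for each $i\in\Z$ I would define $t_i\in\N\cup\{+\infty\}$ to be the smallest minimizer of $f_i$ on $\N$ (with the convention $t_i=+\infty$ if no finite minimizer exists). The convexity of $f_i$ forces the minimum of $s\mapsto f_i(s)$ over $\{0,\ldots,L\}$ to be attained at $s^\star=\min(t_i,L)$, so the optimal number of $\ell_2$-matches is $k^\star=L-s^\star=(L-t_i)^+=k_{t_{i(\x)}}(\x)$; the degenerate case $K(\x)=0$ is covered as well, since $L(\x)\le 0$ forces $k_{t_{i(\x)}}(\x)=0$, which is the only feasible choice. Combining this with the priority matches yields the threshold decision rule
\[m^\star(\x)=(x_0\wedge x_1)\ell_1+(x_2\wedge x_3)\ell_3+k_{t_{i(\x)}}(\x)\ell_2\in\mathscr T\]
satisfying $L^\gamma_{m^\star(\x)}v(\x)=L^\gamma v(\x)$ for every $\x\in\N^4$. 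I expect the main obstacle to be precisely this reparametrization step: one must notice that the residual state after priority matches lies on the ``$i$-diagonal'' and is encoded entirely by $(s,i)$, which is what forces the optimal threshold to depend only on $i(\x)$, exactly as required by the definition of $\mathscr T$.
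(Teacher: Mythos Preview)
Your proposal is correct and follows essentially the same approach as the paper: both invoke Lemma~\ref{lemme35} to reduce to choosing the number of $\ell_2$-matches, then use $\Cld$ to show that the resulting one-parameter family $j\mapsto\esp{v(A+|i|\gre_2+j\ell_2)}$ (for $i\le 0$) or $j\mapsto\esp{v(A+i\gre_1+j\ell_2)}$ (for $i\ge 0$) is convex, and define $t_i$ as its minimizer. Your parametrization via $\mathbf q_i(s)$ and $f_i$ is just a unified way of writing these two sequences, and your ``$s^\star=\min(t_i,L)$'' observation replaces the paper's explicit case split over $t_i=0$, $t_i=+\infty$, $j(\x)\le t_i$, $j(\x)\ge t_i$; the arguments are otherwise identical.
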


\begin{proof}[Proof]
First, observe that if $i\leq0$, by the convexity of $v$ there exists $t_{i}\in\N \ \cup\acco{+\infty}$ such that the sequence 
$\pare{\esp{v\left(A+|i|\gre_2+ j\ell_2\right)}}_{j\in\N}$ is decreasing on $\crocc{0,t_i}$ and increasing on $\crocc{t_i,+\infty}$. All the same, 
if $i\geq0$, then by convexity of $v$ there exists $t_{i}\in\N \ \cup\acco{+\infty}$ such that the sequence 
$\pare{\esp{v\left(A+i\gre_1+ j\ell_2 \right)}}_{j\in\N}$ is decreasing on $\crocc{0,t_i}$ and increasing on $\crocc{t_i,+\infty}$. 
We call $m\et$ the decision rule with thresholds $(t_i)_{i\in\Z}$. 

\medskip 

Now let $\x\in\N^4$. According to Lemma \ref{lemme35}, there exists $\m\in \mathbf M_\x$ that realizes the minimum 
$L^\gamma v(\x)$, and matches all the possible $\ell_1$ and $\ell_3$ edges of $\x$. We call $k_\x$ the number of $\ell_2$ edges in $\m$. 
We are in the following alternative: 
\begin{enumerate}
\item If $x_0\leq x_1$ or $x_2\leq x_3$, then $k_\x=k_{t_{i(\x)}}(\x)=0$ and $\m=m\et(\x)$. We thus have 
$$L^\gamma_{m\et(\x)} v(\x) = L^\gamma_{\m} v(\x) = L^\gamma v(\x).$$ 
\item Otherwise, denoting $j(\x)=(x_1-x_0)\wedge (x_2-x_3)$, we have that 
\begin{align*}
  L^\gamma_{m\et(\x)}v(\x) &= c(\x)+\esp{v\left(A+(x_1-x_0)\gre_1+(x_2-x_3)\gre_2-k_{t_{i(\x)}}(\x)\ell_2\right)}\\
  &=c(\x)+\esp{v\left(A+\left(x_1-x_0-j(\x)\right)^+\gre_1+\left(x_2-x_3-j(\x)\right)^+\gre_2+\left(j(\x)-k_{t_{i(\x)}}(\x)\right)\ell_2\right)}. 
\end{align*}
\noindent We are in the following sub-alternative: 
\begin{itemize}
\item[2a)] If $i(\x)\le 0,$ 
then 
we get that 
\begin{equation*}
  L^\gamma_{m\et(\x)}v(\x) 
  =c(\x)+\esp{v\left(A+|i(\x)|\gre_2+\left(j(\x)-k_{t_{i(\x)}}(\x)\right)\ell_2\right)}. 
\end{equation*}
and we also have 
\[L^\gamma_{\m} v(\x) = c(\x) + \esp{v\left(A+ i(\x)\gre_2+(j(\x)-k_\x)\ell_2\right)}.\]

\noindent We show that, in all cases, 
\begin{equation}
    \label{eq:partita6}
    L^\gamma_{m\et(\x)}v(\x)\le L^\gamma_{\m} v(\x). 
\end{equation}
\begin{itemize}
\item If $t_{i(\x)}=+\infty$, we get that $k_{t_{i(\x)}}(\x)=0\leq k_\x$. So $j(\x)-k_{t_{i(\x)}}(\x)\geq j(\x)-k_\x$. But in that case, 
the sequence $\pare{\esp{v\left(A+|i(x)|\gre_2+j\ell_2\right)}}_j$ is decreasing, which implies \eqref{eq:partita6}.  
\item If $t_{i(\x)}=0$, then the sequence $\pare{\E[v\left(A+|i(\x)|\gre_2+j\ell_2\right)]}_j$ is increasing and 
\[j(\x)-k_{t_{i(\x)}}(\x)=0\leq j(\x)-k_\x,\] implying again \eqref{eq:partita6}. 
\item Otherwise, the sequence $\pare{\E[v\left(A+|i(\x)|\gre_2+j\ell_2\right)]}_j$ is decreasing on 
$\crocc{0,t_{i(\x)}}$ and increasing on $\crocc{t_{i(\x)},+\infty}$. In that case, 
\begin{itemize}
\item If $j(\x)\leq t_{i(\x)}$, then $k_{t_{i(\x)}}(\x)=0$ and 
thus $t_i(\x)\ge j(\x)-k_{t_{i(\x)}}(\x)\ge j(\x)-k_\x$. 
Again, the sequence $\pare{\E[v\left(A+|i(\x)|\gre_2+j\ell_2\right)]}_j$ 
is decreasing on $\crocc{0,t_{i(\x)}}$, implying \eqref{eq:partita6}. 
\item If $j(\x)\geq t_{i(\x)}$, then $j(\x)-k_{t_{i(\x)}}(x)=t_{i(\x)}$ and \eqref{eq:partita6} follows from the fact that the sequence 
$\pare{\E[v\left(A+|i(\x)|\gre_2+j\ell_2\right)]}_j$ reaches its minimum at $j=t_{i(\x)}$. 
\end{itemize}
\end{itemize}

\item[2b)] If $i(\x)>0$, then $j(\x)=x_2-x_3$ and we have 
\begin{align*}
  L^\gamma_{m\et(\x)}v(\x) 
  &=c(\x)+\esp{v\left(A+(x_1-x_0-j(\x))\gre_1+\left(j(\x)-k_{t_{i(\x)}}(\x)\right)\ell_2\right)}\\
  &=c(\x)+\esp{v\left(A+i(\x)\gre_1+\left(j(\x)-k_{t_{i(\x)}}(\x)\right)\ell_2\right)};\\
L^\gamma_{\m} v(\x) &= c(\x) + \esp{v\left(A+ i(\x)\gre_1+(j(\x)-k_\x)\ell_2\right)}.
\end{align*}
\noindent By the symmetry between $\ell_1$ and $\ell_2$, we then conclude that \eqref{eq:partita6} holds, just like in sub-case 2a). 
\end{itemize}
\end{enumerate}
\end{proof}

\subsection{Value function property conservation}

\noindent In this section, we aim at finding a subset of $\Ilun\cap\Ilq\cap\Cld\cap\Ip$ that is stable under $L^\gamma$ for well chosen values 
of $\gamma$. For this we introduce the following two functional spaces, 

\begin{definition} \label{defi:B} 
We denote by $\BB$, the set of mappings $v\devers{\N^4}{\R_+}$ such that, for all $\bb\in\N^4$,
\begin{enumerate}
\item[(i)] If $\bb=\pasc{(1,0,\beta,0)}+\gre_i$ \pasca{for $i\in\{0,2,3\}$ and $\beta\ge 0$,}
\begin{equation}
    \label{eq:says1}
v\left(\bb+2\ell_2-\ell_1\right)-v\left(\bb+\ell_2-\ell_1\right)  \geq  v\left(\bb+\ell_2-\ell_1\right)-v\left(\bb\right) ;
\end{equation}

\item[(ii)] If $\bb=\pasc{(0,\beta,0,1)}+\gre_i$ \pasca{for $i\in\{0,1,3\}$ and $\beta\ge 0$,} 
\begin{equation}
    \label{eq:says2}
v\left(\bb+2\ell_2-\ell_3\right)-v\left(\bb+\ell_2-\ell_3\right)  \geq  v\left(\bb+\ell_2-\ell_3\right)-v\left(\bb\right);
\end{equation}

\item[(iii)] If $\bb=\pasc{(1,0,0,1)}+\gre_i$ for $i\in\llbracket 0,3 \rrbracket$, 
\[v\left(\bb+2\ell_2-\ell_1-\ell_3\right)-v\left(\bb+\ell_2-\ell_1-\ell_3\right)  \geq  v\left(\bb+\ell_2-\ell_1-\ell_3\right)-v(\bb).\]
\end{enumerate} 
\end{definition}

\noindent Observe the following,
\begin{lemma}
    \label{lemma:B}
    For any $\beta\ge 0$, 
    \begin{enumerate}
        \item[(i)] \eqref{eq:says1} holds for any $v\in\Ilun\cap\Cld\cap\BB$ and $\bb={(1,0,\beta,0)}+\gre_i$, $i\in\llbracket 0,3 \rrbracket$; 
        \item[(ii)] \eqref{eq:says2} holds for any $v\in\Ilq\cap\Cld\cap\BB$ and $\bb={(0,\beta,0,1)}+\gre_i$, $i\in\llbracket 0,3 \rrbracket$. 
    \end{enumerate} 
\end{lemma}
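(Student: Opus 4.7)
The plan is to split each part of the lemma according to the index $i$. For part (i), the cases $i\in\{0,2,3\}$ are nothing but the defining inequalities of $\BB$, and similarly in part (ii) the cases $i\in\{0,1,3\}$ are already in $\BB$. So only two ``new'' cases remain to be proved: $i=1$ in (i), giving $\bb=(1,1,\beta,0)$, and $i=2$ in (ii), giving $\bb=(0,\beta,1,1)$. These will be handled by peeling off one $\ell_1$ (resp.\ $\ell_3$) from $\bb$ so as to expose an auxiliary state on which $\Cld$ applies directly.

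For $i=1$ in part (i), I would set $\x:=\bb-\ell_1=(0,0,\beta,0)$ and observe that $\bb+k\ell_2-\ell_1=\x+k\ell_2$ for $k=0,1,2$, so that \eqref{eq:says1} rewrites exactly as
\[
v(\x+2\ell_2)-v(\x+\ell_2)\;\ge\;v(\x+\ell_2)-v(\bb).
\]
Now $\x\in\N^4$ satisfies the side conditions of $\Cld$ (indeed $x_2=\beta\ge -1=x_3-1$ and $x_1=0\ge -1=x_0-1$), so $v\in\Cld$ yields
\[
v(\x+2\ell_2)-v(\x+\ell_2)\;\ge\;v(\x+\ell_2)-v(\x),
\]
while $v\in\Ilun$, applied at $\x$, gives $v(\bb)=v(\x+\ell_1)\ge v(\x)$. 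Replacing $v(\x)$ by the larger quantity $v(\bb)$ on the right only shrinks the right-hand difference, and so chaining the two bounds produces the desired inequality.

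The missing case in (ii) is treated by the symmetric recipe: put $\x:=\bb-\ell_3=(0,\beta,0,0)$, check that $\bb+k\ell_2-\ell_3=\x+k\ell_2$ for $k=0,1,2$ and that the side conditions of $\Cld$ hold trivially (now $x_2=0\ge -1$ and $x_1=\beta\ge -1$), then combine the resulting convex inequality at $\x$ with $v(\bb)=v(\x+\ell_3)\ge v(\x)$ coming from $\Ilq$.

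There is no genuine obstacle here; the whole content is the observation that for exactly the two indices not already covered by $\BB$, subtracting $\ell_1$ (resp.\ $\ell_3$) from $\bb$ lands on a state that is both in $\N^4$ and admissible for $\Cld$, whereas for the other values of $i$ the ``peeled'' state would have a negative coordinate and one really has to rely on the dedicated clause of $\BB$. In particular, the hypothesis $v\in\BB$ is not used at all in the two new cases -- it is invoked only as a black box for the indices already listed in Definition \ref{defi:B}.
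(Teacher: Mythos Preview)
Your proof is correct and follows essentially the same approach as the paper: both reduce to the single missing index in each part, subtract $\ell_1$ (resp.\ $\ell_3$) from $\bb$ to land on $(0,0,\beta,0)$ (resp.\ $(0,\beta,0,0)$), apply $\Cld$ there, and close with the monotonicity from $\Ilun$ (resp.\ $\Ilq$). The paper writes the chain slightly more compactly as $v(\bb+\ell_2-\ell_1)-v(\bb)\le v(\bb+\ell_2-\ell_1)-v(\bb-\ell_1)\le v(\bb+2\ell_2-\ell_1)-v(\bb+\ell_2-\ell_1)$, but the content is identical.
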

\pasca{
\begin{proof}[Proof]
    \begin{enumerate}
        \item[(i)] As $v\in \BB$, by the very assertion (i) in definition \ref{defi:B} it is sufficient to check that \eqref{eq:says1} holds for $\bb$ of the form 
        $\bb=(1,1,\beta,0)$. But this is true since, as $v\in\Ilun\cap\Cld$, we then have 
        \[v\left(\bb+\ell_2-\ell_1\right)-v\left(\bb\right) \le v\left(\bb+\ell_2-\ell_1\right)-v\left(\bb-\ell_1\right)\\
            \le v\left(\bb+2\ell_2-\ell_1\right)-v\left(\bb+\ell_2-\ell_1\right).\]
        \item[(ii)] Likewise, it is enough to show that \eqref{eq:says2} holds for $\bb$ of the form 
        $\bb=(0,\beta,1,1)$. But then, as $v\in\Ilq\cap\Cld$ we get 
        \[v\left(\bb+\ell_2-\ell_3\right)-v\left(\bb\right) \le v\left(\bb+\ell_2-\ell_3\right)-v\left(\bb-\ell_3\right)\\
            \le v\left(\bb+2\ell_2-\ell_3\right)-v\left(\bb+\ell_2-\ell_3\right).\]
    \end{enumerate}
\end{proof}
}

\begin{definition}
\label{defi:C'}
We denote by $\Cp$, the set of mappings $v\devers{\N^4}{\R_+}$ such that, for all $\cc\in\N^4$, 
\begin{enumerate}
\item[(i)] If $\cc=\pasc{(\alpha,0,\beta,0)}+\gre_i$ for $\alpha\geq2$, $\beta\geq0$ and $i\in\llbracket 0,3 \rrbracket$, 
\begin{equation*}
    v(\cc+2(\ell_2-\ell_1))-v(\cc+\ell_2-\ell_1)\geq v(\cc+\ell_2-\ell_1)-v(\cc); 
\end{equation*}
\item[(ii)] If $\cc=\pasc{(\alpha,0,0,1)}+\gre_i$ for $\alpha\geq2$ and $i\in\llbracket 0,3 \rrbracket$,  
\[v(\cc+2(\ell_2-\ell_1)-\ell_3)-v(\cc+\ell_2-\ell_1-\ell_3) \geq v(\cc+\ell_2-\ell_1-\ell_3)- v(\cc);\]
\item[(iii)] If $\cc=\pasc{(0,\beta,0,\alpha)}+\gre_i$ for $\alpha\geq2$, $\beta\geq0$ and $i\in\llbracket 0,3 \rrbracket$, 
\[v(\cc+2(\ell_2-\ell_3))-v(\cc+\ell_2-\ell_3)\geq v(\cc+\ell_2-\ell_3)-v(\cc);\]
\item[(iv)] If $\cc=\pasc{(1,0,0,\alpha)}+\gre_i$ for $\alpha\geq2$ and $i\in\llbracket 0,3 \rrbracket$, 
\[v(\cc+2(\ell_2-\ell_3)-\ell_1)-v(\cc+\ell_2-\ell_3-\ell_1)\geq v(\cc+\ell_2-\ell_3-\ell_1) - v(\cc);\]
\item[(v)] If $\cc=\pasc{(\alpha,0,0,\beta)}+\gre_i$ for $\alpha,\beta\geq2$ and $i\in\llbracket 0,3 \rrbracket$, 
\[v(\cc+2(\ell_2-\ell_1-\ell_3))-v(\cc+\ell_2-\ell_1-\ell_3)\geq v(\cc+\ell_2-\ell_1-\ell_3)-v(\cc);\]
\end{enumerate}
\end{definition}


\noindent We proceed with our key technical result, namely, the study of the stability under $L^\gamma$. 

\begin{proposition} 
\label{prop:stable}
For any $0<\gamma<1$, the set \[\mathscr V := \Ilun\cap\Ilq\cap\Ip\cap\Cld\cap\mathcal{B}\cap\Cp,\] 
is stable by the operator $L^\gamma$.  
\end{proposition}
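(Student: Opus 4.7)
The stability of $\Ilun\cap\Ilq\cap\Ip$ under $L^\gamma$ is already contained in Corollary \ref{coro:angels}, so what remains is to show that each of the three convexity-type conditions defining $\Cld$, $\BB$ and $\Cp$ is preserved when $v\in\mathscr V$. The overall strategy is uniform across the three sets: any inequality to be verified is of the form $L^\gamma v(\x_+)+L^\gamma v(\x_-)\ge 2L^\gamma v(\x_0)$ for some triple of admissible states. First, I would use Lemma \ref{lemme35} to pick matchings $\m_{\pm}\in\mathbf M_{\x_\pm}$ that realise $L^\gamma v(\x_\pm)$ and saturate every available $\ell_1$ and $\ell_3$ edge; then I would build two admissible (but possibly suboptimal) matchings $\m'_\pm\in\mathbf M_{\x_0}$ from $\m_\pm$ by the insertion or deletion of a single $\ell_2$ edge (and, in the $\BB$- and $\Cp$-cases, of the $\ell_1$ or $\ell_3$ edge that bridges the shift between $\x_\pm$ and $\x_0$). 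Using $L^\gamma v(\x_0)\le L^\gamma_{\m'_\pm}v(\x_0)$ on both sides and summing, the $c(\cdot)$-contributions either vanish (for $\Cld$) or amount to a non-negative slack equal to $c(\ell_1)$, $c(\ell_3)$ or $c(\ell_1)+c(\ell_3)$ (for $\BB$, $\Cp$) by linearity of $c$; the problem thus reduces to
\begin{equation*}
\E\bigl[v(\x_+-\m_+ +A)\bigr]+\E\bigl[v(\x_--\m_- +A)\bigr]\ge \E\bigl[v(\x_0-\m'_+ +A)\bigr]+\E\bigl[v(\x_0-\m'_- +A)\bigr],
\end{equation*}
which I aim to establish pointwise in the value $A=\gre_v$ of the arrival.

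At that pointwise level, the proof becomes a case analysis parametrised by the number of $\ell_2$ edges in the optimal matchings $\m_\pm$ and by $v\in V$. In the benign cases, $\m'_\pm$ can be chosen so that the four shifted states $\x_\pm-\m_\pm+\gre_v$ and $\x_0-\m'_\pm+\gre_v$ form an exact convex triple in the direction $\ell_2$, closed by $v\in\Cld$. In the borderline cases, the shifted states fall into one of the canonical configurations listed in Definitions \ref{defi:B} and \ref{defi:C'}, and the inequality comes directly from $v\in\BB\cap\Cp$, extended when the state lies slightly outside the canonical family via Lemma \ref{lemma:B}. When a minimising $\m_\pm$ uses ``too few'' $\ell_1$ or $\ell_3$ edges to align with $\m'_\pm$, an extra match is recovered through the monotonicity hypotheses $v\in\Ilun\cap\Ilq$, realigning the configuration with one of the previous scenarios.

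The main obstacle is the combinatorial bookkeeping. The definitions of $\BB$ and $\Cp$ each impose inequalities at a family of base configurations $\bb+\gre_i$ and $\cc+\gre_i$ indexed by $i\in\llbracket 0,3\rrbracket$ and, for $\Cp$, further by the five patterns (i)--(v); every such inequality for $L^\gamma v$ must be verified for every arrival $A=\gre_v$, and the admissibility of the constructed matchings $\m'_\pm\in\mathbf M_{\x_0}$ must be re-checked in each sub-case, since whenever a coordinate of $\x_\pm$ equals $0$ or $1$ some of the planned edge insertions or deletions are unavailable and one has to fall back on a different choice of $\m'_\pm$ or appeal to $v\in\Ip$ to swap an $\ell_2$ match for an $\ell_1$ or $\ell_3$ match. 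The test class $\mathscr V$, and Lemma \ref{lemma:B} which broadens the reach of the $\BB$-inequalities to configurations of the form $(1,1,\beta,0)$ and $(0,\beta,1,1)$, are precisely calibrated so that every shifted configuration encountered is controlled by one of the constituent properties; the verification is then long but essentially mechanical.
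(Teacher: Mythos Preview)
Your overall strategy is in the right spirit and shares much with the paper's, but it omits one structural ingredient that the paper relies on throughout: before entering any of the case analyses, the paper invokes Proposition~\ref{propseuilopti} (not just Lemma~\ref{lemme35}) to fix a \emph{threshold} decision rule $m^\star$ that realises $L^\gamma v$ at \emph{every} state simultaneously. All three values $L^\gamma v(\x)$, $L^\gamma v(\bar\x)$, $L^\gamma v(\bar{\bar\x})$ are then computed exactly via $m^\star$, and the case split in Step~I is organised around the pattern $(k_{t^\star}(\x),k_{t^\star}(\bar\x),k_{t^\star}(\bar{\bar\x}))$, whose entries differ from one another by at most one thanks to the threshold form of $m^\star$. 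By contrast, you take optimal matchings only at the two outer points and bound the middle point from above; the number of $\ell_2$ edges in your $\m_\pm$ is then unconstrained a priori, so your ``case analysis parametrised by the number of $\ell_2$ edges'' is genuinely larger, and in several sub-cases the information that a given number of $\ell_2$ matches is \emph{optimal} at the middle point (which the paper uses, e.g.\ in case~(1b) via $L^\gamma_{m^\star(\bar\x)}v(\bar\x)\le L^\gamma_{m^\star(\bar\x)+\ell_2}v(\bar\x)$) is exactly what is needed but is unavailable to you.

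A second, related point: for $\Cld$ you propose to build $\m'_\pm$ by ``insertion or deletion of a single $\ell_2$ edge'' only. This is not enough in the boundary configurations $x_1=x_0-1$ or $x_2=x_3-1$ (the paper's Second--Fourth cases): there the number of saturating $\ell_1$ (resp.\ $\ell_3$) edges differs between $\x$ and $\x+\ell_2$, so any admissible $\m'_-\in\mathbf M_{\x+\ell_2}$ built from $\m_-$ must also insert the missing $\ell_1$ (resp.\ $\ell_3$) edge. Once you do that, the residual inequality to be checked in e.g.\ sub-case~(2b) becomes exactly the $\BB$-inequality (plus Lemma~\ref{lemma:B}), which is fine; but in sub-case~(2a) the pointwise inequality you are led to is $v((1,0,\beta,0)+\gre_i)\ge v((0,0,\beta+1,0)+\gre_i)$, and for $i\ne 1$ none of $\Cld,\BB,\Cp,\Ilun,\Ilq,\Ip$ delivers this directly---the paper's argument here again leans on the optimality of $m^\star$ at $\bar{\bar\x}$, i.e.\ on Proposition~\ref{propseuilopti}. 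So while your interchange scheme can be made to work, it does not bypass the threshold structure: you should invoke Proposition~\ref{propseuilopti} up front, as the paper does, and let the values $k_{t^\star}(\cdot)$ drive the case analysis.
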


\begin{proof}[Proof]
Again, it is immediate to observe that any admissible linear cost function is itself an element of $\mathscr V$. 
Let $v\in \mathscr V$. We know from Corollary \ref{coro:angels} that $L^\gamma v$ belongs to $\Ilun\cap\Ilq\cap\Ip$. 
We also let $m\et$ be a decision rule of threshold $(t_i)_{i\in\Z}$ for $v$, satisfying the properties of Proposition \ref{propseuilopti}. We then show successively that $L^\gamma$ also is an element of $\Cld$, $\mathcal{B}$ and $\Cp$. 

\paragraph{\underline{Step I: $L^\gamma v \in \Cld$}.} Let $\x\in\N^4$ be such that $x_1\geq x_0-1$ and $x_2\geq x_3-1$. 
Denote $\ol{\x}=\x+\ell_2$, $\oll{\x}=\x+2\ell_2$ and 
the r.v. $\bb=\x-m\et(\x)+A$. Then, observing that $i(\x)=i(\ol{\x})=i(\oll{\x})$ by definition, denote $$t\et = t_{i(\x)}=t_{i(\ol{\x})}=t_{i(\oll{\x})}.$$

\paragraph{First case: $x_1\geq x_0$ and $x_2\geq x_3$.} 
We are in the following alternative: 
\begin{itemize}
\item[(1a)] If $k_{t\et}(\ol{\x})>0$, then $k_{t\et}(\oll{\x})>0$ and in that case we get 
$$k_{t\et}(\oll{\x})=k_{t\et}(\ol{\x})+1=k_{t\et}(\x)+2$$ and so 
$$\x-m\et(\x)=\ol{\x}-m\et\left(\ol{\x}\right)=\oll{\x}-m\et\left(\oll{\x}\right).$$
Therefore, as $c\in\Cld$ we get  
\begin{align*}
L^\gamma v(\ol{\x})-L^\gamma v(\x) 
= L^\gamma_{m\et(\ol{\x})} v(\ol{\x})-L^\gamma_{m\et(\x)} v(\x)                                  
&=  c(\ol{\x})-c(\x)+\gamma\esp{v\left(\bb\right)-v\left(\bb\right)}\\
&\leq c(\oll{\x})-c(\ol{\x})+\gamma\esp{v\left(\bb\right)-v\left(\bb\right)}\\
&= L^\gamma_{m\et(\oll{\x})} v(\oll{\x}) - L^\gamma_{m\et(\ol{\x})} v(\ol{\x})
= L^\gamma v(\oll{\x}) - L^\gamma v(\ol{\x}).
\end{align*}

\item[(1b)] If $k_{t\et}(\ol{\x})=0$ and $k_{t\et}(\oll{\x})>0$, then $$k_{t\et}(\oll{\x})=k_{t\et}(\ol{\x})+1=k_{t\et}(\x)+1.$$ 
In that case, we have  
\[\ol{\x}-m\et\left(\ol{\x}\right)=x-m\et(\x)+\ell_2 \quad\text{and} \quad \oll{\x}-m\et\left(\oll{\x}\right)=\ol{\x}-m\et\left(\ol{\x}\right).\]
\noindent Consequently, as $m\et\left(\ol{\x}\right)+\ell_2\in \mathbf M_{\ol{\x}}$ we get that 
\begin{align*}
L^\gamma v(\ol{\x})-L^\gamma v(\x) 
 &= c(\ol{\x})-c(\x) + L^\gamma_{m\et\left(\ol{\x}\right)}v(\ol{\x})-L^\gamma_{m\et\left(\ol{\x}\right)+\ell_2}v(\ol{\x})\\
 &\le c(\ol{\x})-c(\x)\\
 & \leq  c(\oll{\x})-c(\ol{\x}) 
 = L^\gamma v(\oll{\x})-L^\gamma v(\ol{\x}).
\end{align*}
\item[(1c)] If $k_{t\et}(\ol{\x})=k_{t_2}(\oll{\x})=0$, reasoning as for the previous case we get that 
\[\oll{\x}-m\et\left(\oll{\x}\right)=\ol{\x}-m\et\left(\ol{\x}\right)+\ell_2 =\x-m\et(\x)+2\ell_2,\]
implying, as $v$ belongs to $\Cld$, that 
\begin{align*}
 L^\gamma v(\ol{\x}) - L^\gamma v(\x) &= c(\ol{\x})-c(\x) + \gamma \esp{v\left(\bb+\ell_2\right)-v\left(\bb\right)} \\
 & \leq  c(\oll{\x})-c(\ol{\x}) + \gamma\esp{v\left(\bb+2\ell_2\right)-v\left(\bb+\ell_2\right)}  
 = L^\gamma v(\oll{\x})-L^\gamma v(\ol{\x}).
\end{align*}
\end{itemize}

\paragraph{Second case: \pasc{$x_1=x_0-1$} and $x_2\geq x_3$.} In that case we have $\bb=\pasc{(1,0,\beta,0})+\gre_i$ with $i\in\llbracket 0,3 \rrbracket$, and $m\et(\bx)=m\et(\x)+\ell_1$, so that 
\begin{equation}
\label{eq:angels1}
L^\gamma v(\ol{\x})-L^\gamma v(\x)  =  c(\ol{\x})-c(\x) + \gamma \esp{v\left(\bb+\ell_2 - \ell_1\right)-v\left(\bb\right)}.
\end{equation}
Then, 
\begin{itemize}
\item[(2a)] If $k_{t\et}(\oll{\x})>0$, we also have $m\et\left(\oll{\x}\right)=m\et\left(\ol{\x}\right)+\ell_2$, hence 
$\oll{\x}-m\et\left(\oll{\x}\right)=\ol{\x}-m\et\left(\ol{\x}\right)$. Thus, as $v\in \Ip$ it follows from \eqref{eq:angels1} that 
\begin{equation*}
    L^\gamma v(\ol{\x})-L^\gamma v(\x) 
    \le c(\ol{\x})-c(\x)
    \le c(\oll{\x})-c(\ol{\x}) =  L^\gamma v(\oll{\x}) - L^\gamma v(\ol{\x}). 
\end{equation*}

\item[(2b)] If $k_{t\et}(\oll{\x})=0$, then $m\et\left(\oll{\x}\right)=m\et\left(\ol{\x}\right)$
and, as $c\in\Cld$ and $v\in \BB$, \eqref{eq:angels1} entails  
\begin{equation*}
L^\gamma v(\ol{\x})-L^\gamma v(\x) 
\leq  c(\oll{\x})-c(\ol{\x})+\gamma\esp{v(\bb+2\ell_2-\ell_1)-v(\bb+\ell_2-\ell_1)} 
= L^\gamma v(\oll{\x}) - L^\gamma v(\ol{\x}). 
\end{equation*}
\end{itemize}

\paragraph{Third case: \pasc{$x_1\geq x_0$} and $x_2=x_3-1$.} This case is symmetrical to the second case, replacing $\ell_1$ by $\ell_3$, and using now assertion (ii) of definition \ref{defi:B}). 





\paragraph{Fourth case: \pasc{$x_1=x_0-1$} and $x_2=x_3-1$.} Then $\bb$ is of the form $\pasc{(1,0,0,1)}+\gre_i$ for some $i\in\llbracket 0,3 \rrbracket$.  
Then, $m\et(\bx)=m\et(\x)+\ell_1+\ell_3$, and as $c\in \Cld$ we have that 
\begin{align}
L^\gamma v(\ol{\x})-L^\gamma v(\x) &= c(\ol{\x})-c(\x) + \gamma \esp{v(\bb+\ell_2 - \ell_1-\ell_3)-v(\bb)}\notag\\
&\le c(\oll{\x})-c(\ol{\x}) + \gamma \esp{v(\bb+\ell_2 - \ell_1-\ell_3)-v(\bb)}.\label{eq:angels2}
\end{align} 
We have the following alternative: 
\begin{itemize}
\item[(4a)] If $k_{t\et}(\oll{\x})>0$, then we have $m\et\left(\oll{\x}\right)=m\et\left(\ol{\x}\right)+\ell_2$, hence $\oll{\x}-m\et\left(\oll{\x}\right)=\ol{\x}-m\et\left(\ol{\x}\right)$. Thus it follows from \eqref{eq:angels2} and the fact that $v\in\Ip\cap\Ilun$, that 
\[L^\gamma v(\ol{\x})-L^\gamma v(\x) \leq  c(\oll{\x})-c(\ol{\x})+\gamma\esp{v(\bb)-v(\bb)} =  L^\gamma v(\oll{\x}) - L^\gamma v(\ol{\x}).\]

\item[(4b)] If $k_{t\et}(\oll{\x})=0$, then $m\et\left(\oll{\x}\right)=m\et\left(\ol{\x}\right)$ and hence, as $v\in\BB$, \eqref{eq:angels2} implies that 
\begin{align*}
L^\gamma v(\ol{\x})-L^\gamma v(\x) &\leq  c(\oll{\x})-c(\ol{\x})+\gamma\esp{v(\bb+2\ell_2-\ell_1-\ell_3)-v(\bb+\ell_2-\ell_1-\ell_3)}\\
&= L^\gamma v(\oll{\x}) - L^\gamma v(\ol{\x}),
\end{align*}
which concludes the proof in the fourth case. 
\end{itemize}
To summarize, in all cases we get that 
\[L^\gamma v(\ol{\x})-L^\gamma v(\x) \le L^\gamma v(\oll{\x}) - L^\gamma v(\ol{\x}).\]
As this is true for all $\x\in\N^4$ such that $x_1\geq x_0-1$ and $x_2\geq x_3-1$, we conclude that $L^\gamma v\in \Cld$. 

\paragraph{\underline{Step II: $L^\gamma v \in \BB$}.}

\paragraph{Assertion (i).} We first check assertion (i) of definition \ref{defi:B}. So we suppose that $\bb=(1,0,\beta,0)+\gre_i$ for $\beta\ge 0$ and 
$i\in \{0,2,3\}$. Throughout this part of the proof we denote $\olb=\bb+\ell_2-\ell_1$ and $\ollb=\bb+\pasc{2\ell_2-\ell_1}$, and let $t\et = t_{i\left(\olb\right)}=t_{i\left(\ollb\right)}$. We show that $L^\gamma(\olb)-L^\gamma(\bb)\le L^\gamma\left(\ollb\right)-L^\gamma\left(\olb\right)$ for each $i\in\{0,2,3\}$:   

\begin{itemize}

%
%
%
%
%

\item[(ia)] If $i=0$, then $\bb=(2,0,\beta,0)$, $\olb=(1,0,\beta+1,0)$ and $\ollb=\pasc{(1,1,\beta+2,0)}$, implying that $m\et\left(\bb\right)=m\et\left(\olb\right)=\mathbf 0$ and $m\et\left(\ollb\right)=\ell_1.$ 
So as $c\in\BB$, it follows from (i) of definition \ref{defi:C'} that   
\begin{align*}
 L^\gamma(\olb)-L^\gamma(\bb) &=c\left(\olb\right)-c(\bb)+\gamma\esp{v(\bb+A+\ell_2-\ell_1)-v(\bb+A)}\\
                      &\le c\left(\ollb\right)-c\left(\olb\right) +\gamma \esp{v(\bb+A+2(\ell_2-\ell_1))-v(\bb+\ell_2-\ell_1+A)}\\
                      &=L^\gamma\left(\ollb\right)-L^\gamma\left(\olb\right).
\end{align*}

\item[(ib)] If $i=2$, then $\bb=\pasc{(1,0,\beta+1,0)}$, $\olb=(0,0,\beta+2,0)$ and $\ollb=\pasc{(0,1,\beta+3,0)}$, in a way that 
$m\et\left(\bb\right)=m\et\left(\olb\right)=\mathbf 0$ and $m\et\left(\ollb\right)=k_{t\et}\left(\ollb\right)\ell_2.$ 
Then, 
\begin{itemize}
    \item If $k_{t\et}\left(\ollb\right)=0$, then in view of (i) of Lemma \ref{lemma:B} we have  
    \begin{align*}
 L^\gamma(\olb)-L^\gamma(\bb) &=c\left(\olb\right)-c(\bb)+\gamma\esp{v(\bb+A+\ell_2-\ell_1)-v(\bb+A)}\\
                      &\le c\left(\ollb\right)-c\left(\olb\right) +\gamma \esp{v(\bb+A+2\ell_2-\ell_1)-v(\bb+\ell_2-\ell_1+A)}\\
                      &=L^\gamma\left(\ollb\right)-L^\gamma\left(\olb\right).
\end{align*}
     \item If $k_{t\et}\left(\ollb\right)=1$, then, as $v\in\Ip$, 
     \begin{align*}
 L^\gamma(\olb)-L^\gamma(\bb) &=c\left(\olb\right)-c(\bb)+\gamma\esp{v(\bb+A+\ell_2-\ell_1)-v(\bb+A)}\\
                      &\le c\left(\olb\right)-c(\bb)\\
                      &\le c\left(\ollb\right)-c\left(\olb\right)=L^\gamma\left(\ollb\right)-L^\gamma\left(\olb\right).
\end{align*}
\end{itemize}

\item[(ic)] If $i=3$, then \pasc{$\bb=(1,0,\beta,1)$, $\olb=(0,0,\beta+1,1)$ and $\ollb=(0,1,\beta+2,1)$}. 
Then we distinguish three cases : 

\begin{itemize}
\item If $\beta\geq1$ and $k_{t\et}\left(\ollb\right)=0$, then applying (i) of Lemma \ref{lemma:B} to $\bb-\ell_3+A$ we obtain 
\begin{align*}
 L^\gamma(\olb)-L^\gamma(\bb) &=c\left(\olb\right)-c(\bb)+\gamma\esp{v(\bb+\ell_2-\ell_1-\ell_3+A)-v(\bb-\ell_3+A)}\\
                      &\le c\left(\ollb\right)-c\left(\olb\right) +\gamma \esp{v(\bb+2\ell_2-\ell_1-\ell_3+A)-v(\bb+\ell_2-\ell_1-\ell_3+A)}\\
                      &=L^\gamma\left(\ollb\right)-L^\gamma\left(\olb\right).
\end{align*}
\item If $\beta\geq1$ and $k_{t\et}\left(\ollb\right)=1$, then as $v\in\Ip$ we get 
\begin{align*}
 L^\gamma(\olb)-L^\gamma(\bb) &=c\left(\olb\right)-c(\bb)+\gamma\esp{v(\bb+\ell_2-\ell_1-\ell_3+A)-v(\bb-\ell_3+A)}\\
                      &\le c\left(\olb\right)-c(\bb)\\
                      &\le c\left(\ollb\right)-c\left(\olb\right)\\
                      &= c\left(\ollb\right)-c\left(\olb\right)+\gamma \esp{v\left(\ollb-\ell_2-\ell_3+A\right)-v(\olb-\ell_3+A)}\\
                      &=L^\gamma\left(\ollb\right)-L^\gamma\left(\olb\right).
\end{align*}
\item If $\beta=0$ and $k_{t\et}\left(\ollb\right)=0$, then applying (iii) of definition \ref{defi:B} to $\bb+A$ we obtain 
\begin{align*}
 L^\gamma(\olb)-L^\gamma(\bb) &=c\left(\olb\right)-c(\bb)+\gamma\esp{v(\bb+\ell_2-\ell_1-\ell_3+A)-v(\bb+A)}\\
                      &\le c\left(\ollb\right)-c\left(\olb\right) +\gamma \esp{v(\bb+2\ell_2-\ell_1-\ell_3+A)-v(\bb+\ell_2-\ell_1-\ell_3+A)}\\
                      &=L^\gamma\left(\ollb\right)-L^\gamma\left(\olb\right).
\end{align*}
\item If $\beta=0$ and $k_{t\et}\left(\ollb\right)=1$ then, using successively the fact that $v$ is an element of $\Ilq$ and of $\Ip$ we get  
\begin{align*}
 L^\gamma(\olb)-L^\gamma(\bb) &=c\left(\olb\right)-c(\bb)+\gamma\esp{v(\bb+\ell_2-\ell_1-\ell_3+A)-v(\bb+A)}\\
                      &\le c\left(\olb\right)-c(\bb)+\gamma\esp{v(\bb+\ell_2-\ell_1+A)-v(\bb+A)}\\
                      &\le c\left(\olb\right)-c(\bb)\\
                      &\le c\left(\ollb\right)-c\left(\olb\right)
                      =L^\gamma\left(\ollb\right)-L^\gamma\left(\olb\right).
\end{align*}
\end{itemize}
\end{itemize}

\paragraph{Assertion (ii).} By symmetry between $\ell_1$ and $\ell_3$, the proof is similar to that of assertion (i), by using Assertion (v) of definition \ref{defi:C'}.

\paragraph{Assertion (iii).} 
Set $\bb=(1,0,0,1)+\gre_i$, for $i\in\llbracket 0,3 \rrbracket$, and denote $\olb=\bb+\ell_2-\ell_1-\ell_3$ and $\ollb=\bb+2\ell_2-\ell_1-\ell_3$. 
We show that $L^\gamma(\olb)-L^\gamma(\bb)\le L^\gamma\left(\ollb\right)-L^\gamma\left(\olb\right)$ for each $i\in\llbracket 0,3 \rrbracket$.   

\begin{itemize}
\item[(iiia)] If $i=0$, then $\bb=(2,0,0,1)$, $\olb=(1,0,0,0)$ and $\ollb=(1,1,1,0)$. Then, from Assertion (ii) of definition \ref{defi:C'} we obtain that 
        \begin{align*}
        L^\gamma(\olb)-L^\gamma(\bb) &=c\left(\olb\right)-c(\bb)+\gamma\esp{v(\bb+\ell_2-\ell_1-\ell_3+A)-v(\bb+A)}\\
                                     &\le c\left(\ollb\right)-c\left(\olb\right) +\gamma \esp{v(\bb+2(\ell_2-\ell_1)-\ell_3+A)-v(\bb+\ell_2-\ell_1-\ell_3+A)}\\
                                     &=L^\gamma\left(\ollb\right)-L^\gamma\left(\olb\right).
        \end{align*}   
\item[(iiib)] If $i=1$, then $\bb=(1,1,0,1)$, $\olb=(0,1,0,0)$ and $\ollb=(0,2,1,0)$. We distinguish two cases: 
    \begin{itemize}
        \item If $k_{t_{i\left(\ollb\right)}}\left(\ollb\right)=0$, then, applying assertion (ii) of Lemma \ref{lemma:B} for $\beta=0$ to $\bb-\ell_1+A$, we get that 
        \begin{align*}
        L^\gamma(\olb)-L^\gamma(\bb) &=c\left(\olb\right)-c(\bb)+\gamma\esp{v(\bb+\ell_2-\ell_1-\ell_3+A)-v(\bb-\ell_1+A)}\\
                                     &\le c\left(\ollb\right)-c\left(\olb\right) +\gamma \esp{v(\bb+2\ell_2-\ell_1-\ell_3+A)-v(\bb+\ell_2-\ell_1-\ell_3+A)}\\
                                     &=L^\gamma\left(\ollb\right)-L^\gamma\left(\olb\right).
        \end{align*}   
        \item If $k_{t_{i\left(\ollb\right)}}\left(\ollb\right)=1$, then, as $v\in\Ip$ we obtain that 
        \begin{align*}
        L^\gamma(\olb)-L^\gamma(\bb) &=c\left(\olb\right)-c(\bb)+\gamma\esp{v(\bb+\ell_2-\ell_1-\ell_3+A)-v(\bb-\ell_1+A)}\\
                                     &\le c\left(\olb\right)-c(\bb)\\
                                     &\le c\left(\ollb\right)-c\left(\olb\right)\\
                                     &= c\left(\ollb\right)-c\left(\olb\right) +\gamma \esp{v(\bb+\ell_2-\ell_1-\ell_3+A)-v(\bb+\ell_2-\ell_1-\ell_3+A)}\\
                                     &=L^\gamma\left(\ollb\right)-L^\gamma\left(\olb\right).
        \end{align*}  
    \end{itemize}

\item[(iiic)] If $i=2$, then $\bb=(1,0,1,1)$, $\olb=(0,0,1,0)$ and $\ollb=(0,1,2,0)$. This case is symmetric to (iiib), using now Assertion (i) of Lemma \ref{lemma:B} for $\beta=0$ to $\bb-\ell_3+A$ if $k_{t_{i\left(\ollb\right)}}\left(\ollb\right)=0$. 

\item[(iiid)] If $i=3$, then $\bb=(1,0,0,2)$, $\olb=(0,0,0,1)$ and $\ollb=(0,1,1,1)$ and the argument is symmetric to that of (iiia), applying now Assertion (iii) 
of definition \ref{defi:C'}. 
\end{itemize}

\paragraph{\underline{Step III: $L^\gamma v \in \Cp$}.}

\paragraph{Assertion (i).} We first show that $L^\gamma$ satisfies Assertion (i) of definition \ref{defi:C'}. For this, let $\cc=(\alpha,0,\beta,0)+\gre_i$ for $\alpha\ge 2$, $\beta\ge 0$ and $i\in\llbracket 0,3 \rrbracket$. 
Let us also set $\olc=\cc+\ell_2-\ell_1$ and $\ollc=\cc+2(\ell_2-\ell_1).$  

\begin{itemize}
    \item[(ia)] If $i=0$, then we get $\cc=(\alpha+1,0,\beta,0)$, $\olc=(\alpha,0,\beta+1,0)$ and 
    $\ollc=(\alpha-1,0,\beta+2,0).$ So as $c\in\Cp$, applying Assertion (i) of definition \ref{defi:C'} to $v$ we get that
    \begin{align*}
        L^\gamma(\olc)-L^\gamma(\cc) &=c\left(\olc\right)-c(\cc)+\gamma\esp{v(\cc+\ell_2-\ell_1+A)-v(\cc+A)}\\
                                     &\le c\left(\ollc\right)-c\left(\olc\right) +\gamma \esp{v(\cc+2(\ell_2-\ell_1)+A)-v(\cc+\ell_2-\ell_1+A)}\\
                                     &=L^\gamma\left(\ollc\right)-L^\gamma\left(\olc\right).
        \end{align*} 
    \item[(ib)] If $i=1$, then $\cc=(\alpha,1,\beta,0)$, $\olc=(\alpha-1,1,\beta+1,0)$ and $\ollc=(\alpha-2,1,\beta+2,0)$ and thus 
    \begin{equation}
        \label{eq:circus1}
        L^\gamma(\olc)-L^\gamma(\cc) =c\left(\olc\right)-c(\cc)+\gamma\esp{v(\cc+\ell_2-2\ell_1+A)-v(\cc-\ell_1+A)}.
    \end{equation}
    We are in the following alternative: 
    \begin{itemize}
        \item If $\alpha=2$ and $k_{t_{i\left(\ollc\right)}}\left(\ollc\right)=0$, then applying Assertion (i) of Lemma \ref{lemma:B} to $\bb:=\cc-\ell_1+A$ we obtain from \eqref{eq:circus1} that 
        \begin{align*}
        L^\gamma(\olc)-L^\gamma(\cc) 
                                     &\le c\left(\ollc\right)-c\left(\olc\right) +\gamma \esp{v(\cc+2\ell_2-2\ell_1+A)-v(\cc+\ell_2-2\ell_1+A)}\\
                                     &=L^\gamma\left(\ollc\right)-L^\gamma\left(\olc\right).
        \end{align*} 
        \item If $\alpha=2$ and $k_{t_{i\left(\ollc\right)}}\left(\ollc\right)=1$, then as $v\in \Ip$, \eqref{eq:circus1} implies 
        \begin{align*}
        L^\gamma(\olc)-L^\gamma(\cc) 
                                     &\le c\left(\olc\right)-c(\cc)+\gamma\esp{v(\cc-\ell_1+A)-v(\cc-\ell_1+A)}\\
                                     &\le c\left(\ollc\right)-c\left(\olc\right)\\
                                     &= c\left(\ollc\right)-c\left(\olc\right) +\gamma \esp{v(\cc+\ell_2-2\ell_1+A)-v(\cc+\ell_2-2\ell_1+A)}\\
                                     &=L^\gamma\left(\ollc\right)-L^\gamma\left(\olc\right).
        \end{align*} 
        \item If $\alpha>2$ then, applying (i) of definition \ref{defi:C'} to $\cc-\ell_1+A$, \eqref{eq:circus1} entails that 
    \end{itemize}
    \begin{align*}
        L^\gamma(\olc)-L^\gamma(\cc) 
                                     &\le c\left(\ollc\right)-c\left(\olc\right) +\gamma \esp{v(\cc+2\ell_2-3\ell_1+A)-v(\cc+\ell_2-2\ell_1+A)}\\
                                     &=L^\gamma\left(\ollc\right)-L^\gamma\left(\olc\right).
        \end{align*} 
    \item[(ic)] If $i=2$, then we get $\cc=(\alpha,0,\beta+1,0)$, $\olc=(\alpha-1,0,\beta+2,0)$ and 
    $\ollc=(\alpha-2,0,\beta+3,0).$ So, as in case (ia), applying Assertion (i) of definition \ref{defi:C'} to $v$ we get that
    \begin{align*}
        L^\gamma(\olc)-L^\gamma(\cc) &=c\left(\olc\right)-c(\cc)+\gamma\esp{v(\cc+\ell_2-\ell_1+A)-v(\cc+A)}\\
                                     &\le c\left(\ollc\right)-c\left(\olc\right) +\gamma \esp{v(\cc+2(\ell_2-\ell_1)+A)-v(\cc+\ell_2-\ell_1+A)}\\
                                     &=L^\gamma\left(\ollc\right)-L^\gamma\left(\olc\right).
        \end{align*} 
    \item[(id)] If $i=3$, then $\cc=(\alpha,0,\beta,1)$, $\olc=(\alpha-1,0,\beta+1,1)$ and $\ollc=(\alpha-2,0,\beta+2,1)$. We have the following cases: 
    \begin{itemize}
        \item If $\beta>0$, then applying Assertion (i) of definition \ref{defi:C'} to $\cc-\ell_3$ we get that 
        \begin{align*}
        L^\gamma(\olc)-L^\gamma(\cc) &=c\left(\olc\right)-c(\cc)+\gamma\esp{v(\cc+\ell_2-\ell_1-\ell_3+A)-v(\cc-\ell_3+A)}\\
                                     &\le c\left(\ollc\right)-c\left(\olc\right) +\gamma \esp{v(\cc+2\ell_2-2\ell_1-\ell_3+A)-v(\cc+\ell_2-\ell_1-\ell_3+A)}\\
                                     &=L^\gamma\left(\ollc\right)-L^\gamma\left(\olc\right).
        \end{align*} 
        \item If $\beta=0$, then Assertion (ii) of definition \ref{defi:C'} applied to $\cc$, shows that
        \begin{align*}
        L^\gamma(\olc)-L^\gamma(\cc) &=c\left(\olc\right)-c(\cc)+\gamma\esp{v(\cc+\ell_2-\ell_1-\ell_3+A)-v(\cc+A)}\\
                                     &\le c\left(\ollc\right)-c\left(\olc\right) +\gamma \esp{v(\cc+2\ell_2-2\ell_1-\ell_3+A)-v(\cc+\ell_2-\ell_1-\ell_3+A)}\\
                                     &=L^\gamma\left(\ollc\right)-L^\gamma\left(\olc\right).
        \end{align*}    
    \end{itemize}
\end{itemize}
This shows that $L^\gamma v$ satisfies Assertion (i) of definition \ref{defi:C'}.

\paragraph{Assertion (ii).} We now show that $L^\gamma$ satisfies Assertion (ii) of definition \ref{defi:C'}. 
Let $\cc=(\alpha,0,0,1)+\gre_i$ for $\alpha\ge 2$ and $i\in\llbracket 0,3 \rrbracket$. 
Let us also now set $\olc=\cc+\ell_2-\ell_1-\ell_3$ and $\ollc=\cc+2(\ell_2-\ell_1)-\ell_3.$ 
\begin{itemize}
    \item[(iia)] If $i=0$, then $\cc=(\alpha+1,0,0,1)$, $\olc=(\alpha,0,0,0)$ and 
    $\ollc=(\alpha-1,0,1,0).$ So, as $v$ satisfies (ii) of definition \ref{defi:C'}, 
    \begin{align*}
        L^\gamma(\olc)-L^\gamma(\cc) &=c\left(\olc\right)-c(\cc)+\gamma\esp{v(\cc+\ell_2-\ell_1-\ell_3+A)-v(\cc+A)}\\
                                     &\le c\left(\ollc\right)-c\left(\olc\right) +\gamma \esp{v(\cc+2(\ell_2-\ell_1)-\ell_3+A)-v(\cc+\ell_2-\ell_1-\ell_3+A)}\\
                                     &=L^\gamma\left(\ollc\right)-L^\gamma\left(\olc\right).
        \end{align*} 
    \item[(iib)] If $i=1$ we obtain $\cc=(\alpha,1,0,1)$, $\olc=(\alpha-1,1,0,0)$ and $\ollc=(\alpha-2,1,1,0).$ Then, 
    \begin{equation}
        \label{eq:circus2}
        L^\gamma(\olc)-L^\gamma(\cc) =c\left(\olc\right)-c(\cc)+\gamma\esp{v(\cc+\ell_2-2\ell_1-\ell_3+A)-v(\cc-\ell_1+A)}.
    \end{equation}
    Then we are in the following alternative: 
    \begin{itemize}
        \item If $\alpha=2$ and $k_{t_{i\left(\ollc\right)}}\left(\ollc\right)=0$ then, applying (iii) of definition \ref{defi:B} to 
        $\bb=\cc-\ell_1+A$, it follows from \eqref{eq:circus2} that 
        \begin{align*}
        L^\gamma(\olc)-L^\gamma(\cc) 
                                     &\le c\left(\ollc\right)-c\left(\olc\right) +\gamma \esp{v(\cc+2\ell_2-2\ell_1-\ell_3+A)-v(\cc+\ell_2-2\ell_1-\ell_3+A)}\\
                                     &=L^\gamma\left(\ollc\right)-L^\gamma\left(\olc\right).
        \end{align*} 
        \item If $\alpha=2$ and $k_{t_{i\left(\ollc\right)}}\left(\ollc\right)=1$, then as $v\in\Ilq\cap\Ip$, \eqref{eq:circus2} 
        implies 
        \begin{align*}
        L^\gamma(\olc)-L^\gamma(\cc) 
                                     &\le c\left(\olc\right)-c(\cc)+\gamma\esp{v(\cc-\ell_1+\ell_2-\ell_1+A)-v(\cc-\ell_1+A)}\\
                                     &\le c\left(\olc\right)-c(\cc)\\
                                     &= c\left(\ollc\right)-c\left(\olc\right) +\gamma \esp{v(\cc+\ell_2-2\ell_1-\ell_3+A)-v(\cc+\ell_2-2\ell_1-\ell_3+A)}\\
                                     &=L^\gamma\left(\ollc\right)-L^\gamma\left(\olc\right).
        \end{align*} 
        \item If $\alpha>2$, we apply (ii) of definition \ref{defi:C'} to $\cc-\ell_1+A$. Then \eqref{eq:circus2} implies that 
        \begin{align*}
        L^\gamma(\olc)-L^\gamma(\cc) &=c\left(\olc\right)-c(\cc)+\gamma\esp{v(\cc+\ell_2-2\ell_1-\ell_3+A)-v(\cc-\ell_1+A)}\\
                                     &\le c\left(\ollc\right)-c\left(\olc\right) +\gamma \esp{v(\cc+2\ell_2-3\ell_1-\ell_3+A)-v(\cc+\ell_2-2\ell_1-\ell_3+A)}\\
                                     &=L^\gamma\left(\ollc\right)-L^\gamma\left(\olc\right).
        \end{align*} 
    \end{itemize}
    \item[(iic)] If $i=2$, then we get $\cc=(\alpha,0,1,1)$, $\olc=(\alpha-1,0,1,0)$ and 
    $\ollc=(\alpha-2,0,2,0).$ So, applying Assertion (i) of definition \ref{defi:C'} for $\cc-\ell_3+A$ yields to 
    \begin{align*}
        L^\gamma(\olc)-L^\gamma(\cc) &=c\left(\olc\right)-c(\cc)+\gamma\esp{v(\cc+\ell_2-\ell_1-\ell_3+A)-v(\cc-\ell_3+A)}\\
                                     &\le c\left(\ollc\right)-c\left(\olc\right) +\gamma \esp{v(\cc+2(\ell_2-\ell_1)-\ell_3+A)-v(\cc+\ell_2-\ell_1-\ell_3+A)}\\
                                     &=L^\gamma\left(\ollc\right)-L^\gamma\left(\olc\right).
        \end{align*} 
    \item[(iid)] If $i=3$, then $\cc=(\alpha,0,0,2)$, $\olc=(\alpha-1,0,0,1)$ and $\ollc=(\alpha-2,0,1,1)$. So applying (v) of definition 
    \ref{defi:C'} we get that 
        \begin{align*}
        L^\gamma(\olc)-L^\gamma(\cc) &=c\left(\olc\right)-c(\cc)+\gamma\esp{v(\cc+\ell_2-\ell_1-\ell_3+A)-v(\cc+A)}\\
                                     &\le c\left(\ollc\right)-c\left(\olc\right) +\gamma \esp{v(\cc+2\ell_2-2\ell_1-2\ell_3+A)-v(\cc+\ell_2-\ell_1-\ell_3+A)}\\
                                     &=L^\gamma\left(\ollc\right)-L^\gamma\left(\olc\right).
        \end{align*} 
\end{itemize}

\paragraph{Assertions (iii) and (iv).} By the symmetry between $\ell_1$ and $\ell_3$, the proofs are analog to that of assertions (i) and (ii), respectively. 

\paragraph{Assertion (v).} We conclude by proving that $L^\gamma v$ satisfies assertion (v) of definition \ref{defi:C'}. 
We let $\cc=(\alpha,0,0,\beta)+\gre_i$ for  $i\in \llbracket 0,3 \rrbracket$, $\alpha\ge 2$ and $\beta\ge 2$, and set 
$\olc=\cc+\ell_2-\ell_1-\ell_3$ and $\ollc=\cc+2(\ell_2-\ell_1-\ell_3)$. 
\begin{itemize}
    \item[(va)] If $i=0$, then $\cc=(\alpha+1,0,0,\beta)$, $\olc=(\alpha,0,0,\beta-1)$ and $\ollc=(\alpha-1,0,0,\beta-2)$. Then, as 
    $v$ satisfies assertion (v) of definition \ref{defi:C'} we obtain that 
    \begin{align*}
        L^\gamma(\olc)-L^\gamma(\cc) &=c\left(\olc\right)-c(\cc)+\gamma\esp{v(\cc+\ell_2-\ell_1-\ell_3+A)-v(\cc+A)}\\
                                     &\le c\left(\ollc\right)-c\left(\olc\right) +\gamma \esp{v(\cc+2\ell_2-2\ell_1-2\ell_3+A)-v(\cc+\ell_2-\ell_1-\ell_3+A)}\\
                                     &=L^\gamma\left(\ollc\right)-L^\gamma\left(\olc\right).
        \end{align*} 
    \item[(vb)] If $i=1$, then $\cc=(\alpha,1,0,\beta)$, $\olc=(\alpha-1,1,0,\beta-1)$ and $\ollc=(\alpha-2,1,0,\beta-2)$. 
    Then we are in the following alternative: 
    \begin{itemize}
        \item If $\alpha=2$, then, applying assertion (iv) of definition \ref{defi:C'} to $v$ and $\cc-\ell_1+A$ leads to 
        \begin{align*}
        L^\gamma(\olc)-L^\gamma(\cc) &=c\left(\olc\right)-c(\cc)+\gamma\esp{v(\cc+\ell_2-2\ell_1-\ell_3+A)-v(\cc-\ell_1+A)}\\
                                     &\le c\left(\ollc\right)-c\left(\olc\right) +\gamma \esp{v(\cc+2\ell_2-2\ell_1-2\ell_3+A)-v(\cc+\ell_2-2\ell_1-\ell_3+A)}\\
                                     &=L^\gamma\left(\ollc\right)-L^\gamma\left(\olc\right).
        \end{align*}
        \item If $\alpha>2$, then we apply assertion (v) of definition \ref{defi:C'} to $v$ and $\cc-\ell_1+A$, and get that 
        \begin{align*}
        L^\gamma(\olc)-L^\gamma(\cc) &=c\left(\olc\right)-c(\cc)+\gamma\esp{v(\cc+\ell_2-2\ell_1-\ell_3+A)-v(\cc-\ell_1+A)}\\
                                     &\le c\left(\ollc\right)-c\left(\olc\right) +\gamma \esp{v(\cc+2\ell_2-3\ell_1-2\ell_3+A)-v(\cc+\ell_2-2\ell_1-\ell_3+A)}\\
                                     &=L^\gamma\left(\ollc\right)-L^\gamma\left(\olc\right).
        \end{align*}
    \end{itemize}
    \item[(vc)] The case $i=2$ is symmetric to (vb). 
    \item[(vd)] The case $i=3$ is symmetric to (va). 
\end{itemize}
This completes the proof that $L^\gamma v \in \Cp$. 
\end{proof}

\subsection{Concluding the proof of Theorem \ref{thm:main}}
\label{subsec:proof}
We check that the assumptions of Theorem \ref{theocentral} are satisfied, for the set $\mathscr V$ which has been shown to be stable under the action of $L^\gamma$ in Proposition \ref{prop:stable}. It is also immediate that it is stable under point-wise convergence, which proves assertion 1). Setting $\mathscr D\equiv \mathscr T$, assertion 2) is granted by Proposition \ref{propseuilopti}. 

We now check that the cost function $c$ satisfies assumptions (i)-(iii) of Theorem \ref{theocentral}. For this, let 
\[w=\fonccc{\N^4}{\R_+}{x}{x_0+x_1+x_2+x_3+1}.\] 

\begin{itemize}
    \item[(i)] For all $\x,\bu\in\N^4$ such that $\x-\bu\in\N^4$, we have $c(\x)/w(\x)\leq \max(c_0,c_1,c_2,c_3)$. Moreover, there is a single arrival at each time step, implying that 
    \begin{align*}
    \dfrac{1}{w(\x)}\dsum_{\by} \PP(\by | \x-\bu)w(\by) = \esp{\dfrac{w(\x-\bu+\ba)}{w(\x)} \Big| A=\ba}  
    &\leq \esp{\dfrac{w(\x+\ba)}{w(\x)}\Big| A=\ba}\\ 
    &= \dfrac{\dsum_{i=0}^3x_i+2 }{\dsum_{i=0}^3x_i + 1} \leq 2. 
    \end{align*}
    \item[(ii)] Let $\pi_0$ be the  ``no matching'' policy. With the same argument, we show that for all $J$-steps policy $\pi$, for all $\x$, 
    $$\mathbb E^\pi_\x\left[w(X_p)\right] \le \mathbb E^{\pi_0}_\x\left[w(X_p)\right] \le w(\x)+p,$$
and so 
\[\dfrac{\varepsilon^p}{w(\x)}\mathbb E^\pi_\x\left[w(X_p)\right]\leq \dfrac{\varepsilon^p(w(\x)+p)}{w(\x)}\leq \varepsilon^p(p+1),\]
and it is sufficient to choose $p\in\N$ such that $\varepsilon^p(p+1)<\dfrac{1}{2}<1$ to conclude.  
\item[(iii)] The last condition is satisfied because in the present case, from Lemma \ref{lemme35} for all $v$ and $\x$, 
$L^\gamma v(\x)$ by taking the lower bound over a finite subset. 
\end{itemize}
This completes the proof. 

\nocite{*}
\bibliographystyle{plain}
\bibliography{graphe_en_N}

\end{document}